\newcommand{\p}{\overline{p}}
\renewcommand{\(}{\left\(}
\renewcommand{\)}{\right\)}
\renewcommand{\[}{\left\[}
\renewcommand{\]}{\right\]}
\numberwithin{equation}{section}
\theoremstyle{plain}
\newtheorem{theorem}{Theorem}[section]
\newtheorem{lemma}[theorem]{Lemma}
\newtheorem{definition}[theorem]{Definition}
\newtheorem{problem}[theorem]{Problem}
\newtheorem{corollary}[theorem]{Corollary}
\newtheorem{proposition}[theorem]{Proposition}
\def\proof{\@ifnextchar[{\@oproof}{\@nproof}}
\def\@oproof[#1][#2]{\trivlist\item[\hskip\labelsep\textit{#2 Proof of\
		#1.}~]\ignorespaces}
\def\@nproof{\trivlist\item[\hskip\labelsep\textit{Proof.}~]\ignorespaces}
\begin{document}
	
	\title[Higher order differences of the logarithm of the overpartition function]{Inequalities for higher order differences of the logarithm of the overpartition function and a problem of Wang-Xie-Zhang} 
	
	\author{Gargi Mukherjee}
	\address{Institute for Algebra, Johannes Kepler University, Altenberger Straße 69, A-4040 Linz, Austria.}
	\email{gargi.mukherjee@dk-compmath.jku.at}
	
	\maketitle
	
	\begin{abstract}
Let $\overline{p}(n)$ denote the overpartition function. In this paper, our primary goal is to study the asymptotic behavior of the finite differences of the logarithm of the overpartition function, i.e., $(-1)^{r-1}\Delta^r \log \p(n)$, by studying the inequality of the following form 
$$\log \Bigl(1+\dfrac{C(r)}{n^{r-1/2}}-\dfrac{C_1(r)}{n^{r}}\Bigr)<(-1)^{r-1}\Delta^r \log \p(n) <\log \Bigl(1+\dfrac{C(r)}{n^{r-1/2}}\Bigr)\ \text{for}\ n \geq N(r),$$
where $C(r), C_1(r), \text{and}\ N(r)$ are computable constants depending on the positive integer $r$, determined explicitly. This solves a problem posed by Wang, Xie and Zhang in the context of searching for a better lower bound of $(-1)^{r-1}\Delta^r \log \p(n)$ than $0$. By settling the problem, we are able to show that 
\begin{equation*}
\lim_{n\rightarrow \infty}(-1)^{r-1}\Delta^r \log \p(n) =\dfrac{\pi}{2}\Bigl(\dfrac{1}{2}\Bigr)_{r-1}n^{\frac{1}{2}-r}.
\end{equation*}
	\end{abstract}

\hspace{0.65 cm} \textbf{Mathematics Subject Classifications.} Primary 05A20; 11B68.\\
\vspace{0.3 cm}
\hspace{0.8 cm} \textbf{Keywords.} Overpartition, $\log$-concavity, Finite difference.

\section{Introduction}\label{intro}
An overpartition of a positive integer $n$ is a nonincreasing sequence of positive integers whose sum is $n$ in which the first occurrence of a number
may be overlined, $\overline{p}(n)$ denotes the number of overpartitions of $n$, and we define $\overline{p}(0)=1$. For example, there are $8$ overpartitions of $3$ enumerated by $3, \overline{3}, 2+1, \overline{2}+1, 2+\overline{1}, \overline{2}+\overline{1},1+1+1, \overline{1}+1+1$. A thorough study of the overpartition function started with the work of Corteel and Lovejoy \cite{CorteelLovejoy}, although it has been studied under different nomenclature that dates back to MacMahon. Similar to the Hardy-Ramanujan-Rademacher formula for the partition function (cf. \cite{RamanujanHardy},\cite{Rademacher}), Zuckerman's \cite{Zuckerman} formula for $\overline{p}(n)$ states that
\begin{equation}\label{Zuckerman}
\overline{p}(n)=\frac{1}{2\pi}\underset{2 \nmid k}{\sum_{k=1}^{\infty}}\sqrt{k}\underset{(h,k)=1}{\sum_{h=0}^{k-1}}\dfrac{\omega(h,k)^2}{\omega(2h,k)}e^{-\frac{2\pi i n h}{k}}\dfrac{d}{dn}\biggl(\dfrac{\sinh \frac{\pi \sqrt{n}}{k}}{\sqrt{n}}\biggr),
\end{equation}
where
\begin{equation*}
\omega(h,k)=\text{exp}\Biggl(\pi i \sum_{r=1}^{k-1}\dfrac{r}{k}\biggl(\dfrac{hr}{k}-\biggl\lfloor\dfrac{hr}{k}\biggr\rfloor-\dfrac{1}{2}\biggr)\Biggr)
\end{equation*}
for positive integers $h$ and $k$. Similarly as Lehmer \cite{Lehmer} obtained an error bound for the partition function $p(n)$, Engel \cite{Engel} determined an error term for $\overline{p}(n)$ and found that
\begin{equation}\label{Engel1}
\overline{p}(n)=\frac{1}{2\pi}\underset{2 \nmid k}{\sum_{k=1}^{N}}\sqrt{k}\underset{(h,k)=1}{\sum_{h=0}^{k-1}}\dfrac{\omega(h,k)^2}{\omega(2h,k)}e^{-\frac{2\pi i n h}{k}}\dfrac{d}{dn}\biggl(\dfrac{\sinh \frac{\pi \sqrt{n}}{k}}{\sqrt{n}}\biggr)+R_{2}(n,N),
\end{equation}
where
\begin{equation}\label{Engel2}
\bigl|R_{2}(n,N)\bigr|< \dfrac{N^{5/2}}{\pi n^{3/2}} \sinh \biggl(\dfrac{\pi \sqrt{n}}{N}\biggr).
\end{equation}
A positive sequence $\{a_n\}_{n \geq 0}$ is $\log$-concave if for all $n \geq 1$,
\begin{equation*}
a^2_n-a_{n-1}a_{n+1} \geq 0.
\end{equation*} 
Engel \cite{Engel} proved that $\{\overline{p}(n)\}_{n \geq 2}$ is $\log$-concave by using the asymptotic formula \eqref{Engel1} with $N=3$ followed by \eqref{Engel2}. Prior to Engel's work on overpartitions, the $\log$-concavity of the partition function $p(n)$ and its associated inequalities has been studied in a wider spectrum, details can be found in \cite{Chentalk}, \cite{Chen2}, and \cite{DeSalvoPak}. Liu and Zhang \cite{LiuZhang} proved a family of inequalities for the overpartition function.\\
Chen, Guo and Wang \cite{Chen3} introduced the notion of ratio $\log$-convexity of a sequence and established that ratio $\log$-convexity implies $\log$-convexity under a certain initial condition. A sequence $\{a_n\}_{n \geq k}$ is called ratio $\log$-convex if $\{a_{n+1}/a_n\}_{n \geq k}$ is $\log$-convex or, equivalently, for $n \geq k+1$,
$$\log a_{n+2}-3\log a_{n+1}+3\log a_n-\log a_{n-1}\geq 0.$$
Let $\Delta$ be the difference operator defined by $\Delta f(n)=f(n+1)-f(n)$. Similar to the work done by Chen et al. \cite{Chen2} for $p(n)$, Wang, Xie and Zhang \cite{WangXieZhang} proved the following two theorems.
\begin{theorem}\cite[Theorem 3.1]{WangXieZhang}\label{wxz1}
For each $r\geq1$, there exists a positive number $n(r)$ such that for all $n \geq n(r)$, $$(-1)^{r-1}\Delta^r\log\overline{p}(n)>0.$$	
\end{theorem}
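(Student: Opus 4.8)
The plan is to replace $\log\overline{p}(n)$ by an explicit real-analytic main term, apply the $r$-fold difference operator, and show that the contribution of the leading exponential $\pi\sqrt{n}$ dominates everything else and carries the correct sign. Starting from Zuckerman's formula \eqref{Zuckerman} together with Engel's error bound \eqref{Engel1}--\eqref{Engel2}, I would first isolate the $k=1$ term and write
$$\overline{p}(n)=\frac{e^{\pi\sqrt{n}}}{8n}\left(1-\frac{1}{\pi\sqrt{n}}\right)\bigl(1+\rho(n)\bigr),$$
where, upon retaining sufficiently many Rademacher terms (\emph{not} merely $k=1$, for which the error is only $O(n^{-1/2})$, but e.g.\ through $k=3$) and invoking \eqref{Engel2}, the relative error is exponentially small, $\rho(n)=O\!\left(e^{-2\pi\sqrt{n}/3}\right)$. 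Taking logarithms yields $\log\overline{p}(n)=\widehat g(n)+\varepsilon(n)$ with
$$\widehat g(x)=\pi\sqrt{x}-\log(8x)+\log\!\left(1-\frac{1}{\pi\sqrt{x}}\right),\qquad \varepsilon(n)=O\!\left(e^{-2\pi\sqrt{n}/3}\right),$$
the function $\widehat g$ being real-analytic for $x>\pi^{-2}$.

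Since $\Delta^r$ is linear, $(-1)^{r-1}\Delta^r\log\overline{p}(n)=(-1)^{r-1}\Delta^r\widehat g(n)+(-1)^{r-1}\Delta^r\varepsilon(n)$. For the analytic piece I would use the integral representation of the finite difference,
$$\Delta^r h(n)=\int_{[0,1]^r} h^{(r)}(n+t_1+\cdots+t_r)\,dt_1\cdots dt_r,$$
which reduces $\Delta^r$ to an average of $r$-th derivatives over a window of length $r$. Differentiating $\widehat g$ termwise produces three contributions whose $r$-th derivatives are of orders $x^{1/2-r}$, $x^{-r}$, and $x^{-1/2-r}$ respectively; the dominant one comes from $\pi\sqrt{x}$, for which
$$\frac{d^r}{dx^r}\,\pi x^{1/2}=\frac{\pi}{2}(-1)^{r-1}\left(\tfrac12\right)_{r-1}x^{1/2-r}.$$
Because $h^{(r)}(n+s)=h^{(r)}(n)\bigl(1+o(1)\bigr)$ uniformly for $s\in[0,r]$ as $n\to\infty$, the integral representation propagates this asymptotics to the finite difference, giving
$$(-1)^{r-1}\Delta^r\widehat g(n)=\frac{\pi}{2}\left(\tfrac12\right)_{r-1}n^{1/2-r}\bigl(1+o(1)\bigr),$$
a positive quantity of order $n^{1/2-r}$ since $\left(\tfrac12\right)_{r-1}>0$; the same computation simultaneously recovers the asymptotic equivalence announced in the abstract.

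It remains to absorb the error. As $\varepsilon$ is exponentially small at every integer, the crude estimate $|\Delta^r\varepsilon(n)|\le 2^r\max_{0\le j\le r}|\varepsilon(n+j)|=O\!\left(e^{-2\pi\sqrt{n}/3}\right)$ suffices, and this beats $n^{1/2-r}$ for every fixed $r$. Combining the two estimates, the analytic main term forces $(-1)^{r-1}\Delta^r\log\overline{p}(n)>0$ once $n$ exceeds some effectively computable threshold $n(r)$, which is the assertion. The main obstacle I anticipate is not the sign of the leading term, which is transparent, but the uniformity of the estimates across the difference window $\{n,\dots,n+r\}$: one must bound the $r$-th derivatives of the subdominant pieces of $\widehat g$ and the remainder in the finite-difference-to-derivative comparison uniformly in that window, and one must keep enough Rademacher terms that $\varepsilon$ is genuinely exponentially small rather than merely $O(n^{-1/2})$—otherwise the crude bound on $\Delta^r\varepsilon$ would fail to defeat the $n^{1/2-r}$ main term as $r$ grows.
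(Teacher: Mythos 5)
Your proposal is correct and follows essentially the same route the paper takes: the decomposition $\log\overline{p}(n)=\widehat g(n)+\varepsilon(n)$ is the paper's $H_r+G_r$ splitting from \eqref{eqn1}--\eqref{eqn6}, your crude bound $|\Delta^r\varepsilon(n)|\le 2^r\max_j|\varepsilon(n+j)|$ is exactly the argument of Lemma \ref{lem1}, and your integral representation of $\Delta^r$ is just the standard proof of the Odlyzko comparison (Proposition \ref{Odprop}) that the paper invokes for the main term. Your closing caveat about needing more than the $k=1$ Rademacher term to make the relative error exponentially small is well taken and matches the paper's choice of $N=3$ in \eqref{Engel2}.
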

\begin{theorem}\cite[Theorem 4.1]{WangXieZhang}\label{wxz2}
	For each $r\geq1$, there exists a positive number $n(r)$ such that for all $n \geq n(r)$, $$(-1)^{r-1}\Delta^r\log\overline{p}(n)<1+\dfrac{\pi}{2}\Bigl(\dfrac{1}{2}\Bigr)_{r-1}\dfrac{1}{n^{r-\frac{1}{2}}},$$	
\end{theorem}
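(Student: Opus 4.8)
The plan is to isolate from $\log\overline{p}(n)$ an explicit smooth main term whose $r$-th difference I can compute asymptotically, and then to show that this main term is governed by the contribution of $\pi\sqrt{n}$, whose $r$-th derivative produces precisely the quantity $\frac{\pi}{2}\bigl(\tfrac12\bigr)_{r-1}n^{\frac12-r}$ on the right-hand side, while every remaining contribution decays to $0$. First I would invoke Engel's truncation \eqref{Engel1}--\eqref{Engel2} with $N=3$, which reproduces the $k=1$ term of Zuckerman's formula \eqref{Zuckerman} exactly and controls the tail by $R_2(n,3)$. Writing the $k=1$ term as $M(n)=\frac{\cosh(\pi\sqrt{n})}{4n}-\frac{\sinh(\pi\sqrt{n})}{4\pi n^{3/2}}$, one has $\overline{p}(n)=M(n)\bigl(1+\varepsilon(n)\bigr)$ with $\varepsilon(n)$ exponentially small relative to $M(n)$, of order $n^{-1/2}e^{-2\pi\sqrt{n}/3}$. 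Taking logarithms and expanding $\cosh$, $\sinh$ and $\tanh$ modulo exponentially small errors then yields the decomposition
$$\log\overline{p}(n)=\phi(n)+\eta(n),\qquad \phi(n):=\pi\sqrt{n}-\log(8n)+\log\Bigl(1-\tfrac{1}{\pi\sqrt{n}}\Bigr),$$
where $\eta(n)$ is exponentially small.

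With this in hand I would split $(-1)^{r-1}\Delta^r\log\overline{p}(n)=(-1)^{r-1}\Delta^r\phi(n)+(-1)^{r-1}\Delta^r\eta(n)$. For the exponentially small part, the expansion $\Delta^r f(n)=\sum_{k=0}^{r}(-1)^{r-k}\binom{r}{k}f(n+k)$ gives $\lvert\Delta^r\eta(n)\rvert\le 2^r\max_{0\le k\le r}\lvert\eta(n+k)\rvert$, which is itself exponentially small and hence below any fixed threshold for $n\ge n(r)$. For the main part I would pass from differences to derivatives through the integral representation
$$\Delta^r\phi(n)=\int_0^1\!\!\cdots\!\int_0^1 \phi^{(r)}(n+u_1+\cdots+u_r)\,du_1\cdots du_r.$$
A direct computation gives $\frac{d^r}{dn^r}\bigl(\pi\sqrt{n}\bigr)=(-1)^{r-1}\frac{\pi}{2}\bigl(\tfrac12\bigr)_{r-1}n^{\frac12-r}$, so after multiplication by $(-1)^{r-1}$ the $\pi\sqrt{n}$ term contributes exactly $\frac{\pi}{2}\bigl(\tfrac12\bigr)_{r-1}n^{\frac12-r}$ up to the gap between $\Delta^r$ and $\frac{d^r}{dn^r}$, which is $O\bigl(n^{-\frac12-r}\bigr)$. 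The other two pieces of $\phi$ are of strictly lower order: the $r$-th derivative of $-\log(8n)$ equals $-(-1)^{r-1}(r-1)!\,n^{-r}$, contributing an $O(n^{-r})$ term of the favourable (negative) sign after multiplying by $(-1)^{r-1}$, while $\log\bigl(1-\tfrac{1}{\pi\sqrt{n}}\bigr)=O(n^{-1/2})$ has $r$-th difference of size $O\bigl(n^{-\frac12-r}\bigr)$.

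Collecting the estimates I would obtain
$$(-1)^{r-1}\Delta^r\log\overline{p}(n)-\frac{\pi}{2}\Bigl(\tfrac12\Bigr)_{r-1}\frac{1}{n^{r-\frac12}}=O\bigl(n^{-r}\bigr)+O\bigl(\text{exponentially small}\bigr),$$
and since the right-hand side tends to $0$ it is strictly less than $1$ for all $n\ge n(r)$, which is exactly the asserted inequality. The hard part will be the rigorous control of the derivatives $\phi^{(r)}(x)$, uniformly for $x\in[n,n+r]$: whereas $\sqrt{n}$ and $\log n$ differentiate transparently, bounding the $r$-th derivative of the composite $\log\bigl(1-\tfrac{1}{\pi\sqrt{n}}\bigr)$ demands either a Faà di Bruno expansion or termwise differentiation of the series $-\sum_{j\ge1}(\pi\sqrt{n})^{-j}/j$, and one must confirm that all such contributions are genuinely $O(n^{-r})$ and therefore cannot disturb the leading $n^{\frac12-r}$ term. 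Since the theorem requires only the existence of a threshold $n(r)$, no sharp constants are needed, and once these derivative bounds are established the conclusion follows simply by driving the error terms below $1$.
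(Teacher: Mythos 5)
Your proposal is correct and follows essentially the same route as the paper: the paper does not reprove this cited result of Wang--Xie--Zhang, but its stronger Theorem \ref{mainresult2} is obtained by exactly your decomposition --- Engel's truncation with $N=3$, splitting $\log\overline{p}(n)$ into the smooth part $\pi\sqrt{n}-\log(8n)+\log\bigl(1-\tfrac{1}{\pi\sqrt{n}}\bigr)$ plus an exponentially small remainder (cf.\ \eqref{eqn4}--\eqref{eqn6}), followed by termwise differentiation of the series $-\sum_{k\geq 1}\frac{1}{k\,\widehat{\mu}(n)^{k}}$. The only cosmetic difference is that you pass from $\Delta^{r}$ to $\phi^{(r)}$ via the iterated-integral identity, whereas the paper uses Odlyzko's two-sided inequality (Proposition \ref{Odprop}); the two devices play identical roles here.
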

where $\left(\alpha\right)_{r}:=\alpha\cdot(\alpha+1)\cdots(\alpha+n-1)$.

They raised the following question:
\begin{problem}\cite[Problem 3.4]{WangXieZhang}\label{wxz3}
Does there exist a positive number A such that
\begin{equation*}
\dfrac{(-1)^{r-1}\Delta^r \log \p(n)}{n^{-(r-1/2)}}>A,
\end{equation*}
for any $r$ and all sufficiently large $n$?
\end{problem}
In other words, their statement reads
 ``\textit{Moreover, we also wish to seek for a sharp lower bound for $(-1)^{r-1}\Delta^r \log \p(n)$}''. \\
The main motivation of this paper is to give an affirmative solution to the Problem \ref{wxz3} in Theorems \ref{mainresult1} and \ref{mainresult2}. This in turn clarifies the asymptotic growth of $(-1)^{r-1}\Delta^r \log \p(n)$, see Corollary \ref{Cor1}. Moreover, we reprove the $\log$-concavity and its companion inequality in Corollary \ref{Cor2}.

\begin{theorem}\label{mainresult1}
For $n \geq 26$,
\begin{equation}\label{mainresult1eqn}
\log \Bigl(1+\dfrac{\pi}{2\sqrt{n}}\Bigr)<\Delta \log \p(n)<\log \Bigl(1+\dfrac{\pi}{2\sqrt{n}}+\dfrac{\pi^2}{40n}\Bigr).
\end{equation}
\end{theorem}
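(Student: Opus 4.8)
The plan is to write $\Delta\log\p(n)=\log\bigl(\p(n+1)/\p(n)\bigr)$ and to reduce the ratio to the dominant $k=1$ term of Zuckerman's expansion \eqref{Zuckerman}. Setting
\begin{equation*}
M(n):=\frac{1}{2\pi}\frac{d}{dn}\left(\frac{\sinh(\pi\sqrt n)}{\sqrt n}\right)=\frac{\pi\sqrt n\cosh(\pi\sqrt n)-\sinh(\pi\sqrt n)}{4\pi n^{3/2}},
\end{equation*}
I would apply Engel's truncation \eqref{Engel1}--\eqref{Engel2} with $N=3$: bounding the $k=3$ term explicitly and the tail $R_2(n,3)$ by \eqref{Engel2} shows both are $O(e^{\pi\sqrt n/3})$, whereas $M(n)\sim e^{\pi\sqrt n}/(8n)$. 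Hence $\p(n)=M(n)\bigl(1+\rho(n)\bigr)$ with an explicit, exponentially small bound $|\rho(n)|\le\rho_0(n)=O(e^{-2\pi\sqrt n/3})$, so that
\begin{equation*}
\Delta\log\p(n)=\Delta\log M(n)+\bigl[\log(1+\rho(n+1))-\log(1+\rho(n))\bigr],
\end{equation*}
the bracket being controlled by $2\rho_0(n)$.

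The analytic core is $\Delta\log M(n)$. Factoring $\cosh(\pi\sqrt n)$ out of $M$ yields the closed form
\begin{equation*}
\log M(n)=\pi\sqrt n-\log(8n)+\log\!\left(1+e^{-2\pi\sqrt n}\right)+\log\!\left(1-\frac{\tanh(\pi\sqrt n)}{\pi\sqrt n}\right),
\end{equation*}
whence
\begin{equation*}
\Delta\log M(n)=\frac{\pi}{\sqrt{n+1}+\sqrt n}-\log\!\left(1+\frac1n\right)+\Delta A(n)+\Delta B(n),
\end{equation*}
with $A(n)=\log(1+e^{-2\pi\sqrt n})$ exponentially small and $B(n)=\log\bigl(1-\tanh(\pi\sqrt n)/(\pi\sqrt n)\bigr)$. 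Since $\tanh(\pi\sqrt n)=1+O(e^{-2\pi\sqrt n})$, I would replace $B(n)$ by $\log(1-1/(\pi\sqrt n))$ up to exponentially small error. Writing $x=1/\sqrt n$ and expanding each surviving term with explicit Taylor remainders gives
\begin{equation*}
\Delta\log M(n)=\frac{\pi}{2}x-x^{2}+\frac{4-\pi^{2}}{8\pi}\,x^{3}+O(x^{4}).
\end{equation*}

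Comparing this with $\log(1+\frac{\pi}{2}x)=\frac{\pi}{2}x-\frac{\pi^2}{8}x^2+\frac{\pi^3}{24}x^3+\cdots$ and with $\log(1+\frac{\pi}{2}x+\frac{\pi^2}{40}x^2)=\frac{\pi}{2}x-\frac{\pi^2}{10}x^2+\frac{7\pi^3}{240}x^3+\cdots$ pins down exactly why the stated constants are correct. For the lower bound, the leading coefficient of $\Delta\log M-\log(1+\frac{\pi}{2}x)$ is $\frac{\pi^2-8}{8}>0$. For the upper bound, the leading ($x^2$) coefficient of $\log(1+\frac{\pi}{2}x+\frac{\pi^2}{40}x^2)-\Delta\log M$ equals $\frac{\pi^2}{40}-\frac{\pi^2}{8}+1=\frac{10-\pi^2}{10}$, positive precisely because $\pi^2<10$; this near-equality is exactly the calibration that dictates the constant $\pi^2/40$, the value just large enough to overcome the $-\pi^2/8+1$ arising from the lower-order expansion. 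The corresponding $x^3$-coefficient $\frac{7\pi^3}{240}-\frac{4-\pi^2}{8\pi}$ is also positive and reinforces the upper bound. I would turn these sign statements into rigorous inequalities for $n\ge n_1$ by bounding the Taylor remainders of $\log(1+\cdot)$ and the $\tanh$-correction by explicit monotone functions of $x$, and absorbing the exponentially small $\rho$- and $A$-contributions.

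The main obstacle is the extreme tightness at the stated threshold. At $n=26$ one has $x\approx0.196$, and a direct computation gives $\Delta\log M(26)\approx0.26865$ against $\log(1+\pi/(2\sqrt{26}))\approx0.26854$, a lower-bound margin of only about $10^{-4}$; at $n=25$ the analogous difference is already negative, which is exactly why the hypothesis reads $n\ge26$. Since the neglected $k\ge3$ corrections $\rho_0(n)$ are themselves of order $10^{-4}$ here, no purely asymptotic remainder estimate can certify the boundary, and only the exact integers $\p(25),\p(26),\p(27)$ settle it. I therefore expect the proof to split into (i) an asymptotic regime $n\ge n_1$, handled by the coefficient comparison with explicit remainder control, where all margins are comfortably $\Theta(x^2)$ and the corrections are negligible; and (ii) the finite range $26\le n<n_1$, verified directly from the exact values of $\p(n)$, which are computable from $\sum_{n\ge0}\p(n)q^n=\prod_{k\ge1}(1+q^k)/(1-q^k)$ (or, equivalently, from $M(n)$ together with the rigorous bound \eqref{Engel2}). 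Choosing $n_1$ so that the two regimes meet, and executing the finite verification, is the delicate but routine remaining work.
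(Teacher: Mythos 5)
Your proposal is correct and follows essentially the same route as the paper: isolate the dominant term of Engel's $N=3$ truncation, show the remainder is exponentially small, compare $\Delta$ of the logarithm of the main term against the Taylor expansions of $\log\bigl(1+\tfrac{\pi}{2\sqrt n}\bigr)$ and $\log\bigl(1+\tfrac{\pi}{2\sqrt n}+\tfrac{\pi^2}{40n}\bigr)$ (your coefficient computations, including the decisive $\tfrac{\pi^2-8}{8}$ and $\tfrac{10-\pi^2}{10}$, are right), and finish with a numerical check of the finite range, whose unavoidability near $n=26$ you correctly diagnose. The only difference is technical: the paper controls the difference of the main term by sandwiching it between derivatives via Odlyzko's result (Proposition \ref{Odprop}), obtaining the explicit bounds $L^{(1)}(n)\le H_1\le U^{(1)}(n)$ valid down to $n\ge 457$ resp.\ $n\ge 85$, whereas you difference the closed form of $\log M(n)$ exactly; both lead to the same comparison and the same Mathematica verification of the remaining cases.
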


For $r \geq 2$, we define
\begin{definition}\label{Def}
	\begin{eqnarray}\label{def1}
		N_0(m) &:=&
	\begin{cases}
	1, &\quad \text{if}\ m=1,\\
	2m \log m-m \log \log m, & \quad \text{if}\ m \geq 2,
	\end{cases}\\\label{def2}
	N_1(r)&:=&\max\Biggl\{85, \Biggl\lceil \dfrac{4}{\pi^2}N^2_0(2r+2)\Biggr\rceil \Biggr\},\\\label{def3}
	C(r)&:=&\dfrac{\pi}{2}\Bigl(\dfrac{1}{2}\Bigr)_{r-1},\\\label{def4}
	C_1(r)&:=&(r-1)!+4r^2 C(r),\\\label{def5}
	C_2(r)&:=& \sum_{k=0}^{2r-2}\dfrac{1}{(k+1)\pi^{k+1}}\Bigl(\dfrac{k+1}{2}\Bigr)_r \dfrac{1}{r^k}+\dfrac{r}{10^r},\\\label{def6}
	N_2(r)&:=&\Biggl \lceil \Biggl(\dfrac{1+C_1(r)}{C(r)}\Biggr)^2\Biggr\rceil,\\\label{def7}
	N_3(r)&:=&\max \Biggl\{N_1(r),2r^2, \Biggl \lceil \Biggl(\dfrac{2^{r+1}\Bigl(C_2(r)+1\Bigr)}{(r-1)!}\Biggr)^2\Biggr \rceil, \Biggl\lceil \sqrt[r-1]{\Biggl(\dfrac{2^r C^2(r)}{(r-1)!}\Biggr)} \Biggr\rceil \Biggr\},\nonumber\\
	\and \hspace{3 cm} & & \\\label{def8}
	N(r)&:=&\max \Bigl\{N_2(r), N_3(r)\Bigr\}.
	\end{eqnarray}
\end{definition}

\begin{theorem}\label{mainresult2}
For $r \in \mathbb{Z}_{\geq 2}$ and $n \geq N(r)$,
\begin{equation}\label{mainresult2eqn}
0<\log \Bigl(1+\dfrac{C(r)}{n^{r-1/2}}-\dfrac{C_1(r)}{n^{r}}\Bigr)<(-1)^{r-1}\Delta^r \log \p(n) <\log \Bigl(1+\dfrac{C(r)}{n^{r-1/2}}\Bigr),
\end{equation}
where $C(r)$ and $C_1(r)$ are given in \eqref{def3}-\eqref{def4}.
\end{theorem}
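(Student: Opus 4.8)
The plan is to reduce everything to the smooth $k=1$ main term of Zuckerman's formula \eqref{Zuckerman} and to treat the rest as an exponentially small perturbation. Writing the $k=1$ summand as a function of a real variable,
$$\widehat{p}(x)=\frac{e^{\pi\sqrt{x}}}{8x}\Bigl(1-\frac{1}{\pi\sqrt{x}}\Bigr)+\frac{e^{-\pi\sqrt{x}}}{8x}\Bigl(1+\frac{1}{\pi\sqrt{x}}\Bigr),$$
and invoking Engel's bound \eqref{Engel1}--\eqref{Engel2} with $N=3$, I would record $\log\overline{p}(n)=g(n)+\mathcal{E}(n)$, where $g(x)=\log\widehat{p}(x)$ and $\mathcal{E}(n)$ absorbs the $k=3$ term together with $R_2(n,3)$. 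The point of choosing $N=3$ is that $\mathcal{E}(n)$ is of size $O(e^{-c\sqrt{n}})$ rather than merely $O(n^{-1/2})$; the thresholds $N_0(m)$ and $N_1(r)$ in Definition \ref{Def} are precisely what is needed so that, for $n\ge N_1(r)$, this term and all of its differences up to order $r$ stay well below the scale $n^{-r-1/2}$ at which the genuine corrections live. I would then split the smooth part as $g(x)=\pi\sqrt{x}-\log x-\log 8+\psi(x)$, where $\psi(x)=\log\bigl[(1-\tfrac{1}{\pi\sqrt{x}})+e^{-2\pi\sqrt{x}}(1+\tfrac{1}{\pi\sqrt{x}})\bigr]$.

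The engine of the argument is the integral representation of the forward difference, $\Delta^r f(n)=\int_{[0,1]^r}f^{(r)}(n+t_1+\cdots+t_r)\,dt_1\cdots dt_r$, which converts each finite difference into an average of an $r$-th derivative. Applying it to the three pieces of $g$ gives, with $s=t_1+\cdots+t_r$,
$$(-1)^{r-1}\Delta^r g(n)=C(r)\!\int_{[0,1]^r}\!(n+s)^{\frac12-r}\,ds-(r-1)!\!\int_{[0,1]^r}\!(n+s)^{-r}\,ds+(-1)^{r-1}\Delta^r\psi(n),$$
where I have used $\frac{d^r}{dx^r}x^{1/2}=(-1)^{r-1}\tfrac12(\tfrac12)_{r-1}x^{1/2-r}$ and $\frac{d^r}{dx^r}\log x=(-1)^{r-1}(r-1)!\,x^{-r}$, so that the leading coefficient is exactly $C(r)=\frac{\pi}{2}(\tfrac12)_{r-1}$. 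Since $\tfrac12-r<0$ the first integrand is decreasing, whence $\int_{[0,1]^r}(n+s)^{1/2-r}ds\le n^{1/2-r}$, while the convexity bound $(1+u)^{1/2-r}\ge 1-(r-\tfrac12)u$ yields a matching lower estimate differing by $O\bigl(r^2C(r)\,n^{-r-1/2}\bigr)$; the middle integral is $n^{-r}(1+O(r^2/n))$. The remaining task is to bound $\psi^{(r)}$: expanding $\log(1-\tfrac{1}{\pi\sqrt{x}})$ in powers of $x^{-1/2}$ and differentiating term by term gives $|\psi^{(r)}(x)|\le C_2(r)\,x^{-r-1/2}$ for $x\ge 2r^2$, the truncated sum in \eqref{def5} being exactly $\sum_{k}\frac{1}{(k+1)\pi^{k+1}}(\tfrac{k+1}{2})_r r^{-k}$ (using $x^{-k/2}\le r^{-k}$), with the tail and the $e^{-2\pi\sqrt{x}}$ contribution swept into the $r\,10^{-r}$ summand.

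With these estimates the two inequalities separate cleanly. For the lower bound I would use $\log(1+y)\le y$: it suffices to prove $(-1)^{r-1}\Delta^r\log\overline{p}(n)\ge C(r)n^{1/2-r}-C_1(r)n^{-r}$, and in the collected estimates the only $n^{-r}$-term is $-(r-1)!\,n^{-r}$, while the corrections from the first integral, from $\psi^{(r)}$, and from $\mathcal{E}$ are all $O(n^{-r-1/2})$ and are dominated, for $n\ge N_3(r)$, by the extra margin $4r^2C(r)\,n^{-r}$ built into $C_1(r)=(r-1)!+4r^2C(r)$. For the upper bound I would use $\log(1+y)\ge y-\tfrac12 y^2$ and show $(-1)^{r-1}\Delta^r\log\overline{p}(n)< C(r)n^{1/2-r}-\tfrac12 C(r)^2 n^{1-2r}$; here the strictly negative contribution $-(r-1)!\int(n+s)^{-r}ds$, of exact order $n^{-r}$, beats the positive $O(n^{-r-1/2})$ error from $\psi^{(r)}$ and the $O(n^{1-2r})$ loss from the logarithm once $n$ exceeds the remaining two entries of $N_3(r)$. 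Finally $n\ge N_2(r)$ forces $C(r)n^{1/2}>C_1(r)$, so the argument of the left-hand logarithm exceeds $1$ and the chain opens with $0<\log(\cdots)$. The main obstacle is the second step: producing the fully explicit constant $C_2(r)$ that controls $\psi^{(r)}$ and simultaneously pinning down $\mathcal{E}$ with completely effective thresholds, so that every error term can be certified to fit inside the $4r^2C(r)$ and $\tfrac12 C(r)^2$ budgets for all $n\ge N(r)$ at once.
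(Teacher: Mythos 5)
Your plan follows the paper's strategy in all essential respects: the same split of $\log\p(n)$ into a smooth main term coming from the $k=1$ term of Engel's formula with $N=3$ plus an exponentially small remainder whose $r$-th difference is crushed by a $2^r\sup$-bound together with Lemma \ref{lem0}; the same expansion of the smooth part into $C(r)n^{1/2-r}$, a $-(r-1)!\,n^{-r}$ term, and an order-$n^{-r-1/2}$ series controlled by $C_2(r)$; and the identical endgame ($\log(1+y)\le y$ for the lower bound, $\log(1+y)\ge y-\tfrac12 y^2$ for the upper bound, with $N_1(r),N_2(r),N_3(r)$ absorbing the error budgets exactly as in \eqref{rslt2eqn1}--\eqref{rslt2eqn5}). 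The one genuine substitution is that you convert $\Delta^r$ of the smooth part via the exact integral representation $\Delta^r f(n)=\int_{[0,1]^r}f^{(r)}(n+t_1+\cdots+t_r)\,dt$, whereas the paper uses Odlyzko's two-sided inequality (Proposition \ref{Odprop}); these are interchangeable here (Odlyzko's bound follows from your identity once the derivatives alternate in sign), and your version has the mild advantage of not requiring the sign conditions $(-1)^{k-1}f^{(k)}>0$ for each summand.

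The one place your bookkeeping would not go through as written is the treatment of $\psi$. By keeping the $e^{-\pi\sqrt{x}}$ half of the $k=1$ summand inside the smooth part, you must bound $r$ derivatives of $\log\bigl[(1-\tfrac{1}{\pi\sqrt{x}})+e^{-2\pi\sqrt{x}}(1+\tfrac{1}{\pi\sqrt{x}})\bigr]$, and the claim $|\psi^{(r)}(x)|\le C_2(r)x^{-r-1/2}$ with the paper's $C_2(r)$ is not justified: repeated differentiation of the nested $e^{-2\pi\sqrt{x}}$ produces terms that are exponentially small but not transparently dominated by $r\,10^{-r}x^{-r-1/2}$, and in any case the summand $r/10^r$ in \eqref{def5} is reserved for the tail $k\ge 2r$ of the power series in $x^{-1/2}$ (see \eqref{lem3eqn10}--\eqref{lem3eqn12}), not for the exponential term. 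The clean repair is the paper's bookkeeping: file the $e^{-\widehat{\mu}(n)}$ piece into the remainder $\widehat{R}(n)$ alongside $R_2(n,3)$, so that the smooth part is exactly $\widehat{\mu}(n)-2\log\widehat{\mu}(n)+\log(1-1/\widehat{\mu}(n))$ and the exponential is handled by the same $2^r e^{-\widehat{\mu}(n)/2}<n^{-r-1}$ estimate as the Rademacher error. With that relocation, your argument closes and delivers the stated constants.
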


\begin{corollary}\label{Cor1} For $r \in \mathbb{Z}_{\geq 1}$,
	\begin{equation}\label{coreqn1}
	\lim_{n\rightarrow \infty} n^{r-1/2}(-1)^{r-1}\Delta^r \log \p(n)= \dfrac{\pi}{2}\Bigl(\dfrac{1}{2}\Bigr)_{r-1}.
	\end{equation}
\end{corollary}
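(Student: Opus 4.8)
The plan is to deduce the corollary from the two-sided bounds already established, by multiplying through by $n^{r-1/2}$ and invoking the squeeze theorem after a first-order logarithmic expansion. I would split the argument according to the cases $r=1$ and $r \geq 2$, which are governed by Theorems~\ref{mainresult1} and~\ref{mainresult2} respectively. Note first that $\left(\tfrac12\right)_{0}=1$, so that $C(1)=\tfrac{\pi}{2}$; hence in both regimes the target constant is uniformly $C(r)=\tfrac{\pi}{2}\left(\tfrac12\right)_{r-1}$, and it suffices to show that the scaled expression converges to $C(r)$. Throughout I would use the elementary estimate $\log(1+x)=x+O(x^{2})$ as $x\to 0^{+}$.

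For $r=1$, multiply each member of \eqref{mainresult1eqn} by $\sqrt{n}$ (valid for $n\geq 26$). Applying the expansion with $x=\tfrac{\pi}{2\sqrt{n}}$ in the lower bound gives $\sqrt{n}\log\bigl(1+\tfrac{\pi}{2\sqrt{n}}\bigr)=\tfrac{\pi}{2}+O(n^{-1/2})$, while with $x=\tfrac{\pi}{2\sqrt{n}}+\tfrac{\pi^{2}}{40n}$ in the upper bound one gets $\sqrt{n}\,x=\tfrac{\pi}{2}+\tfrac{\pi^{2}}{40\sqrt{n}}$ and $\sqrt{n}\cdot O(x^{2})=O(n^{-1/2})$, so the upper bound also equals $\tfrac{\pi}{2}+O(n^{-1/2})$. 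Both sandwiching sequences therefore tend to $\tfrac{\pi}{2}=C(1)$, establishing \eqref{coreqn1} for $r=1$.

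For $r\geq 2$, multiply each member of \eqref{mainresult2eqn} by $n^{r-1/2}$ (valid for $n\geq N(r)$). With $x=C(r)n^{-(r-1/2)}\to 0$ the upper bound becomes $n^{r-1/2}\log\bigl(1+C(r)n^{-(r-1/2)}\bigr)=C(r)+O(n^{-(r-1/2)})\to C(r)$. For the lower bound, set $x=C(r)n^{-(r-1/2)}-C_{1}(r)n^{-r}$; then $n^{r-1/2}x=C(r)-C_{1}(r)n^{-1/2}$, while the quadratic error contributes $n^{r-1/2}\cdot O(x^{2})=O(n^{-(r-1/2)})$, so the scaled lower bound equals $C(r)-C_{1}(r)n^{-1/2}+O(n^{-(r-1/2)})\to C(r)$ as well. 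Since both bounding sequences converge to $C(r)$, the squeeze theorem yields \eqref{coreqn1} for every $r\geq 1$. I do not anticipate any genuine obstacle: the only point meriting a line of care is confirming that the correction $-C_{1}(r)n^{-r}$ in the lower bound of Theorem~\ref{mainresult2}, once multiplied by $n^{r-1/2}$, produces the vanishing term $-C_{1}(r)n^{-1/2}$ rather than a term comparable to $C(r)$, which is immediate from comparing exponents. The whole argument is a routine application of the sandwich principle to the inequalities proved earlier.
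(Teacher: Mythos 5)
Your proposal is correct and is essentially the same argument as the paper's: the paper simply multiplies \eqref{mainresult1eqn} (resp. \eqref{mainresult2eqn}) by $\sqrt{n}$ (resp. $n^{r-1/2}$) and takes the limit, with the squeeze and the $\log(1+x)=x+O(x^2)$ expansion left implicit. Your write-up just makes those routine details explicit.
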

\begin{proof}
Multiplying both sides of \eqref{mainresult1eqn} (resp. \eqref{mainresult2eqn}) by $\sqrt{n}$ (resp. by $n^{r-1/2}$) and taking limit as $n$ tends to infinity, we obtain \eqref{coreqn1}.
\end{proof}

\begin{corollary}\cite[Theorem 1.2]{Engel}\label{Cor2} For $n \geq 4$, $\p(n)$ is $\log$-concave.
\end{corollary}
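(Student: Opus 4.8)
The plan is to derive the corollary from the $r=2$ case of Theorem \ref{mainresult2} and then clear the finitely many small indices by direct computation. First I would recast log-concavity in the finite-difference language of the paper. For the positive sequence $\{\p(n)\}$, the log-concavity inequality at index $n$, namely $\p(n)^2 \geq \p(n-1)\p(n+1)$, is equivalent to $2\log\p(n) \geq \log\p(n-1)+\log\p(n+1)$, which rearranges to
$$\log\p(n-1) - 2\log\p(n) + \log\p(n+1) \leq 0.$$
Since $\Delta^2 f(m) = f(m) - 2f(m+1) + f(m+2)$, the left-hand side equals $\Delta^2\log\p(n-1)$, so the displayed inequality is precisely $(-1)^{2-1}\Delta^2\log\p(n-1) \geq 0$. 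Hence proving the corollary for all $n \geq 4$ is exactly proving $(-1)^{r-1}\Delta^r\log\p(m) \geq 0$ with $r=2$ for every $m \geq 3$.

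For the tail I would invoke Theorem \ref{mainresult2} with $r=2$ directly. Evaluating the constants of Definition \ref{Def} at $r=2$ gives $C(2)=\tfrac{\pi}{4}$, $C_1(2)=1+4\pi$, and $N(2)=\max\{N_2(2),N_3(2)\}=344$ (one checks $N_2(2)=344$ and $N_3(2)=132$). The theorem then yields the strict inequality
$$(-1)^{2-1}\Delta^2\log\p(m) > \log\Bigl(1+\frac{C(2)}{m^{3/2}}-\frac{C_1(2)}{m^{2}}\Bigr) > 0 \qquad \text{for all } m \geq 344,$$
which in particular gives $(-1)^{1}\Delta^2\log\p(m)\geq 0$ for every $m \geq 344$. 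Translating back, log-concavity holds automatically at every index $n$ with $n-1 \geq 344$, i.e. for all $n \geq 345$.

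It remains to settle the finite range $3 \leq m \leq 343$, equivalently the log-concavity indices $4 \leq n \leq 344$. Here I would compute the finitely many integers $\p(n)$ for $4 \leq n \leq 345$ exactly, using Zuckerman's series \eqref{Zuckerman}, or equivalently Engel's truncation \eqref{Engel1} with the error bound \eqref{Engel2} taking $N$ large enough that $\bigl|R_2(n,N)\bigr| < \tfrac12$ so that rounding pins down each integer value, and then verify $\p(n)^2 \geq \p(n-1)\p(n+1)$ for each $n$ in this range. Because all the quantities are integers, these comparisons are exact once the values are certified, and the whole check is a finite, routine computation.

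The only genuine difficulty is organizational rather than conceptual: since Theorem \ref{mainresult2} becomes effective only at $m = N(2) = 344$, the finite verification must cover the several hundred small cases, and one must be careful to certify the overpartition numbers exactly via \eqref{Engel2} rather than relying on floating-point approximations. All the analytic substance is already carried by Theorem \ref{mainresult2}; the corollary then follows by specializing to $r=2$, reinterpreting the sign $(-1)^{r-1}\Delta^r$, and disposing of the bounded initial segment.
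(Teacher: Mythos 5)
Your proposal is correct and follows essentially the same route as the paper: specialize Theorem \ref{mainresult2} to $r=2$, note $N(2)=344$ so the lower bound gives $(-1)\Delta^2\log\p(m)>0$ for $m\geq 344$ (log-concavity at indices $n\geq 345$), and verify the remaining finitely many cases by direct computation. Your index bookkeeping and the remark about certifying exact integer values of $\p(n)$ are in fact slightly more careful than the paper's one-line proof.
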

\begin{proof}
Observe that $N(2)=344$ and from the lower bound of \eqref{mainresult2eqn}, we observe that $\{\p(n)\}_{n \geq 344}$ is $\log$-concave and for the rest $5 \leq n \leq 343$, we confirm by numerical checking in Mathematica.
\end{proof}

\begin{corollary}\cite[Equation (1.6)]{LiuZhang}\label{Cor3} For $n \geq 2$, 
	\begin{equation}\label{coreqn2}
	\dfrac{\p(n-1)}{\p(n)}\Bigl(1+\dfrac{\pi}{4n^{3/2}}\Bigr)>\dfrac{\p(n+1)}{\p(n)}.
	\end{equation}
\end{corollary}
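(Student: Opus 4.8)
The plan is to read \eqref{coreqn2} as the exponentiated form of the $r=2$ case of the upper bound in \eqref{mainresult2eqn}. First I would cross‑multiply and take logarithms: since $\log$ is increasing, \eqref{coreqn2} is equivalent to the one‑sided estimate for the second difference
\begin{equation*}
-\Delta^2\log\p(n-1)=2\log\p(n)-\log\p(n-1)-\log\p(n+1)<\log\Bigl(1+\frac{\pi}{4n^{3/2}}\Bigr).
\end{equation*}
By \eqref{def3} one has $C(2)=\frac{\pi}{2}\bigl(\frac12\bigr)_{1}=\frac{\pi}{4}$, so the right‑hand side above is exactly the $r=2$ specialization of the upper bound in Theorem \ref{mainresult2}; thus the whole corollary is a statement about the log‑concavity defect $\p(n)^2/\bigl(\p(n-1)\p(n+1)\bigr)$.

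Next I would invoke Theorem \ref{mainresult2} with $r=2$, which gives $-\Delta^2\log\p(m)<\log\bigl(1+\frac{\pi}{4m^{3/2}}\bigr)$ for every $m\geq N(2)=344$. Setting $m=n-1$ yields, for all $n\geq 345$, the displayed second‑difference inequality but with $\frac{\pi}{4(n-1)^{3/2}}$ in place of the $\frac{\pi}{4n^{3/2}}$ required by \eqref{coreqn2}: the estimate of Theorem \ref{mainresult2} is normalized by the smallest of the three indices $n-1,n,n+1$, whereas \eqref{coreqn2} is normalized by the middle index $n$.

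The main obstacle is therefore to pass from the index $n-1$ to the index $n$ inside the bound, a change that costs $\log\bigl(1+\frac{\pi}{4(n-1)^{3/2}}\bigr)-\log\bigl(1+\frac{\pi}{4n^{3/2}}\bigr)=O(n^{-5/2})$. To absorb this I would exploit the genuine slack in the estimate: from $\log\p(n)=\pi\sqrt n-\log n+O(1)$ one finds $-\Delta^2\log\p(n-1)=\frac{\pi}{4}n^{-3/2}-n^{-2}+O(n^{-5/2})$, so the actual value lies below the target $\log\bigl(1+\frac{\pi}{4n^{3/2}}\bigr)$ by an amount of order $n^{-2}$, which dominates the $O(n^{-5/2})$ normalization gap for large $n$. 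This slack is of the same order $n^{-2}$ as the $C_1(r)/n^{r}$ corrections already controlled in the proof of \eqref{mainresult2eqn}, so the sharper constant of \eqref{coreqn2} survives for all $n$ beyond an explicit threshold. Finally, for the finitely many remaining values $2\leq n\leq N$ below that threshold I would verify \eqref{coreqn2} directly by numerical computation in Mathematica, exactly as in the proof of Corollary \ref{Cor2}.
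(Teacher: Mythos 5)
Your proposal follows the same route as the paper's own proof: specialize the upper bound of Theorem \ref{mainresult2} to $r=2$ (so that $C(2)=\pi/4$) and settle the finitely many remaining $n$ numerically, exactly as in Corollary \ref{Cor2}. Two remarks are worth making. First, your second-difference reformulation $2\log\p(n)-\log\p(n-1)-\log\p(n+1)<\log(1+\pi/(4n^{3/2}))$ is equivalent not to \eqref{coreqn2} as printed but to $\frac{\p(n-1)}{\p(n)}(1+\frac{\pi}{4n^{3/2}})>\frac{\p(n)}{\p(n+1)}$; the printed right-hand side $\p(n+1)/\p(n)$ must be a typo, since $\p(n+1)/\p(n-1)\approx 1+\pi/\sqrt{n}$ makes the literal claim false for large $n$, and Liu--Zhang's (1.6) indeed reads $\p(n)/\p(n+1)$ --- so what you prove is the intended statement. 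Second, the index-normalization issue you isolate is genuine: Theorem \ref{mainresult2} applied at $m=n-1$ produces the constant $\pi/(4(n-1)^{3/2})$ rather than the required $\pi/(4n^{3/2})$, and the paper's one-line proof passes over this silently. Your fix --- absorbing the $O(n^{-5/2})$ cost of re-indexing into the negative $-(r-1)!/(2^{r}m^{r})=-1/(4m^{2})$ term that is explicitly present in the intermediate estimate \eqref{rslt2eqn3} before it is weakened to \eqref{rslt2eqn5} --- is sound and is in fact needed to make the deduction rigorous beyond an explicit threshold, with a numerical check below it. In short, your argument is the paper's argument with the missing bridging step supplied; to make it fully explicit you would only need to write out the elementary inequality $\frac{\pi}{4}\bigl((n-1)^{-3/2}-n^{-3/2}\bigr)+\frac{C_2(2)+1}{(n-1)^{5/2}}+\frac{\pi^2}{32n^{3}}<\frac{1}{4(n-1)^{2}}$ for $n$ large and record the resulting threshold.
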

\begin{proof}
 Similar to the proof of Corollary \ref{Cor2}, take $r=2$ and from the upper bound of \eqref{mainresult2eqn}, we conclude the proof.
\end{proof}

\begin{corollary}\label{Cor4} For $n \geq 18$, $\p(n)$ is ratio $\log$-convex. 
\end{corollary}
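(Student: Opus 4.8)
The plan is to translate ratio $\log$-convexity into a statement about the third forward difference of $\log\p$ and then invoke Theorem \ref{mainresult2} with $r=3$. By the equivalent formulation of ratio $\log$-convexity recalled in the introduction, $\{\p(n)\}_{n\geq 18}$ is ratio $\log$-convex precisely when
$$\log\p(n+2)-3\log\p(n+1)+3\log\p(n)-\log\p(n-1)\geq 0\qquad\text{for all }n\geq 19.$$
The key observation is that the left-hand side is exactly $\Delta^3\log\p(n-1)$: expanding $\Delta^3 f(m)=f(m+3)-3f(m+2)+3f(m+1)-f(m)$ and setting $m=n-1$ reproduces the displayed combination. Hence the corollary is equivalent to the single inequality $\Delta^3\log\p(m)\geq 0$ for every $m\geq 18$, and since $(-1)^{3-1}=1$ this is just $(-1)^{r-1}\Delta^r\log\p(m)\geq 0$ with $r=3$.

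First I would dispose of the tail. Applying Theorem \ref{mainresult2} with $r=3$, the lower bound in \eqref{mainresult2eqn} is strictly positive, so $\Delta^3\log\p(n)>0$ for all $n\geq N(3)$. It then remains only to evaluate $N(3)$ from Definition \ref{Def}. A direct computation gives $C(3)=\tfrac{3\pi}{8}$ and $C_1(3)=2+\tfrac{27\pi}{2}$, whence $N_2(3)=\bigl\lceil((1+C_1(3))/C(3))^2\bigr\rceil=1486$, while each of the four quantities entering $N_3(3)$ is smaller (in particular $N_1(3)=305$); thus $N(3)=1486$. Consequently the ratio $\log$-convexity inequality holds automatically for all $m\geq 1486$.

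Finally, for the finite range $18\leq m\leq 1485$ I would verify $\Delta^3\log\p(m)>0$ directly, exactly as in the proof of Corollary \ref{Cor2}, by computing the relevant values of $\p$ in Mathematica. Combining the asymptotic regime $m\geq 1486$ with this finite check establishes $\Delta^3\log\p(m)\geq 0$ for all $m\geq 18$, which is the claim. The only real work is the finite verification: because the general threshold $N(3)$ produced by Theorem \ref{mainresult2} is around $1.5\times 10^{3}$, the numerical range is sizeable, but it is entirely routine and presents no conceptual obstacle. The whole content of the corollary is the $r=3$ specialization of Theorem \ref{mainresult2} together with the algebraic identity above.
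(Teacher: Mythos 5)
Your proposal is correct and follows exactly the paper's (much terser) argument: unwind ratio $\log$-convexity into $\Delta^3\log\p(m)\geq 0$ for $m\geq 18$, apply Theorem \ref{mainresult2} with $r=3$ and the computed threshold $N(3)=1486$, and close the finite gap $18\leq m\leq 1485$ numerically. Your evaluation of the constants ($C(3)=3\pi/8$, $C_1(3)=2+27\pi/2$, $N_2(3)=1486$ dominating $N_3(3)=305$) checks out.
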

\begin{proof}
Take $r=3$ and observe that $N(3)=1486$ and rest of the proof is similar to the proof of Corollary \ref{Cor2}..
\end{proof}

As an immediate consequence of Corollary \ref{Cor4}, we have

\begin{corollary}\cite[Corollary 1.3]{Gargi}\label{Cor5}
The sequence $\{\sqrt[n]{\p(n)}\}_{n \geq 4}$ is $\log$-convex.	
\end{corollary}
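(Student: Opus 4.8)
The plan is to reduce the statement to the (discrete) convexity of the sequence $\{f(n)/n\}$, where $f(n):=\log\overline p(n)$, and then to feed Corollary~\ref{Cor4} into an averaging principle. Indeed, $\{\sqrt[n]{\overline p(n)}\}_{n\ge 4}$ is $\log$-convex precisely when
\begin{equation*}
\frac{f(n-1)}{n-1}+\frac{f(n+1)}{n+1}\ge \frac{2f(n)}{n}\qquad(n\ge 5),
\end{equation*}
i.e. when $n\mapsto f(n)/n$ is convex. The structural observation that makes this tractable is that $\overline p(0)=1$, so $f(0)=0$, and therefore
\begin{equation*}
\frac{f(n)}{n}=\frac1n\sum_{k=0}^{n-1}\bigl(f(k+1)-f(k)\bigr)=\frac1n\sum_{k=0}^{n-1}d_k,\qquad d_k:=\Delta\log\overline p(k).
\end{equation*}
Thus $f(n)/n$ is exactly the Ces\`aro mean of the first-difference sequence $\{d_k\}$.

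Next I would translate Corollary~\ref{Cor4} into a statement about $\{d_k\}$. Ratio $\log$-convexity of $\{\overline p(n)\}_{n\ge 18}$ means that $\{\overline p(n+1)/\overline p(n)\}$ is $\log$-convex, and since $\log\bigl(\overline p(k+1)/\overline p(k)\bigr)=d_k$, this is exactly the assertion $d_{k-1}-2d_k+d_{k+1}\ge 0$ for $k\ge 19$; that is, $\{d_k\}_{k\ge 18}$ is a convex sequence. I would then invoke the elementary averaging lemma that the Ces\`aro means of a convex sequence are again convex. Were $\{d_k\}_{k\ge 0}$ convex in its entirety, this lemma would yield the convexity of $f(n)/n$ at once; the only defect is the finitely many ``head'' indices $k<18$, where convexity of $\{d_k\}$ is not guaranteed.

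To isolate the head I would replace $\{d_k\}$ by the sequence $\{e_k\}$ that agrees with $d_k$ for $k\ge 18$ and is continued linearly (hence convexly) for $k<18$. The averaging lemma applies to $\{e_k\}$, so $A_n:=\frac1n\sum_{k=0}^{n-1}e_k$ is convex, while for $n\ge 19$
\begin{equation*}
\frac{f(n)}{n}=A_n+\frac{C}{n},\qquad C:=\sum_{k=0}^{17}(d_k-e_k)\ \text{a fixed constant}.
\end{equation*}
Taking second differences and using $\Delta^2(1/n)=\tfrac{2}{n(n^2-1)}>0$, convexity of $f(n)/n$ is immediate when $C\ge 0$, and when $C<0$ it reduces to the effective inequality $\Delta^2 A_n\ge 2|C|/\bigl(n(n^2-1)\bigr)$, which the strict convexity of the tail $\{d_k\}_{k\ge 18}$ (made quantitative through the asymptotics of $\Delta\log\overline p$ underlying Theorems~\ref{mainresult1}--\ref{mainresult2}) secures for all $n$ beyond an explicit threshold. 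The finitely many remaining values $4\le n<\text{threshold}$ would be confirmed by direct computation, exactly as in the proofs of Corollaries~\ref{Cor2}--\ref{Cor4}.

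The step I expect to be the main obstacle is precisely this last one: controlling the possibly concave perturbation $C/n$ coming from the non-convex initial block $k<18$ against the genuinely convex tail. Equivalently, one must verify the ``initial condition'' that upgrades ratio $\log$-convexity to $\log$-convexity of the $n$-th roots — the analogue for $\overline p(n)$ of the Chen--Guo--Wang gap condition. Everything else (the reduction, the averaging lemma, and the bounded-range numerical check) is routine.
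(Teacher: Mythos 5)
Your route is genuinely different from the paper's. The paper offers no argument at all for this corollary: it simply declares it an immediate consequence of Corollary \ref{Cor4}, implicitly invoking the principle of Chen--Guo--Wang \cite{Chen3} (ratio $\log$-convexity plus a suitable initial condition implies $\log$-convexity of the $n$-th root sequence) and deferring the details to \cite{Gargi}. Your Ces\`aro-mean argument is in effect a self-contained proof of that principle. The reduction to convexity of $\log\p(n)/n$, the identity $\log\p(n)/n=\tfrac1n\sum_{k=0}^{n-1}d_k$ (valid because $\p(0)=1$), the translation of Corollary \ref{Cor4} into convexity of $\{d_k\}_{k\ge 18}$, and the averaging lemma are all correct --- the lemma is a true classical fact, provable by decomposing a convex sequence into nonnegative combinations of linear sequences and corner sequences $(k-m)^{+}$, each of whose Ces\`aro means is easily checked to be convex. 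What your approach buys is transparency: the ``certain initial condition'' that the paper's citation sweeps under the rug is exactly your constant $C=\sum_{k=0}^{17}(d_k-e_k)$ together with the small-$n$ verification.

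The step you flag is the only genuine gap, and as written it is a plan rather than a proof: in the branch $C<0$ you need $\Delta^2 A_n\ge 2|C|/\bigl(n(n^2-1)\bigr)$, whereas the averaging lemma by itself only yields $\Delta^2 A_n\ge 0$. Making it quantitative requires rerunning the lemma's proof with the explicit lower bound on $\Delta^3\log\p$ from Theorem \ref{mainresult2} with $r=3$ (valid only for $n\ge N(3)=1486$), which gives $\Delta^2 A_n$ of order $n^{-5/2}$ and hence eventually dominates $|C|\,n^{-3}$; this works but is more bookkeeping than you acknowledge. The cleaner fix is to observe that $C$ is a single explicitly computable number (a sum of $18$ terms built from $\p(0),\dots,\p(20)$), so its sign is a finite check; if $C\ge 0$ --- as one expects, since for instance $d_0=\log 2$ already exceeds the backward linear extension $e_0=d_{18}+18(d_{18}-d_{19})$ --- then $\log\p(n)/n=A_n+C/n$ is a sum of two convex sequences for $n\ge 19$ and the remaining cases $4\le n\le 19$ are verified directly, exactly as in Corollaries \ref{Cor2}--\ref{Cor4}. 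With that one observation supplied, your proof closes.
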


This paper is organized as follows. A preliminary setup for decomposing $(-1)^{r-1}\Delta^r \log \p(n)$ $=H_r+G_r$ (cf. see \eqref{eqn4} and \eqref{eqn5}), as done in \cite{WangXieZhang} and consequently, estimations for both $H_r$ and $G_r$ are given in Section \ref{sec2}. Proofs of Theorems \ref{mainresult1} and \ref{mainresult2} are given in Section \ref{sec3}.
\section{preliminary lemmas}\label{sec2}
Following the notations given in Engel \cite{Engel} and Wang, Xie and Zhang \cite{WangXieZhang}, split $\p(n)$ as
\begin{equation}\label{eqn1}
\p(n)=\widehat{T}(n) \Biggl(1+\dfrac{\widehat{R}(n)}{\widehat{T}(n)}\Biggr),
\end{equation}
where 
\begin{eqnarray}\label{eqn2}
\widehat{T}(n)&=& \dfrac{1}{8n}\Bigl(1-\dfrac{1}{\widehat{\mu}(n)}\Bigr)e^{\widehat{\mu}(n)}\\ \label{eqn3}
\text{and}\ \ \widehat{R}(n)&=& \dfrac{1}{8n}\Bigl(1+\dfrac{1}{\widehat{\mu}(n)}\Bigr)e^{-\widehat{\mu}(n)}+R_2(n,3)
\end{eqnarray}
with $\widehat{\mu}(n)=\pi \sqrt n$.\\
Taking the logarithm on both sides of \eqref{eqn1} and plugging the definitions from \eqref{eqn2}-\eqref{eqn3}, we obtain
\begin{equation*}
\log \p(n)=\log \dfrac{\pi^2}{8}-3 \log \widehat{\mu}(n)+\log (\widehat{\mu}(n)-1)+\widehat{\mu}(n)+\log\  \Biggl(1+\dfrac{\widehat{R}(n)}{\widehat{T}(n)}\Biggr).
\end{equation*}
Therefore, 
\begin{equation}\label{eqn4}
(-1)^{r-1}\Delta^r \log \p(n)=H_r+G_r,
\end{equation}
where
\begin{eqnarray}\label{eqn5}
H_r &=& (-1)^{r-1}\Delta^r (-3 \log \widehat{\mu}(n)+\log (\widehat{\mu}(n)-1)+\widehat{\mu}(n))\\ \label{eqn6}
G_r &=& (-1)^{r-1}\Delta^r \log\  \Biggl(1+\dfrac{\widehat{R}(n)}{\widehat{T}(n)}\Biggr).
\end{eqnarray}
Then we have that for $r \geq 1$,
\begin{equation}\label{eqn8}
H_r-|G_r| \leq (-1)^{r-1}\Delta^r \log \p(n) \leq H_r+|G_r|.
\end{equation}
To estimate the bounds for $(-1)^{r-1}\Delta^r \log \p(n)$, we need to establish bounds for $H_r$ and $|G_r|$. Our first goal is to determine a bound for $|G_r|$ for $r \geq 1$ and then we further proceed with $H_r$ but splitting in two cases, i.e., for $r=1$ and $r \geq 2$.
\begin{lemma}\cite[Lemma 2.1]{GZZ}\label{lem0}
	For any integer $m\geq1$ and $x \geq N_0(m)$,
	\begin{align*}
	x^me^{-x}<1,
	\end{align*}
where $N_0(m)$ is defined in \eqref{def1}.
\end{lemma}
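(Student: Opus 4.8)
The plan is to pass to logarithms and reduce everything to a single inequality evaluated at the left endpoint $x = N_0(m)$, the extension to larger $x$ being handled by monotonicity. Since both sides are positive, $x^m e^{-x} < 1$ is equivalent to $m\log x < x$, i.e.\ to positivity of $g(x) := x - m\log x$ on $[N_0(m),\infty)$. The case $m = 1$ is immediate: $g(x) = x - \log x$ has $g'(x) = 1 - 1/x \geq 0$ for $x \geq 1$ and $g(1) = 1 > 0$, so $g > 0$ on $[1,\infty) = [N_0(1),\infty)$.

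For $m \geq 2$ I would exploit the shape of $g$. From $g'(x) = 1 - m/x$ the function is increasing on $[m,\infty)$, so once we know $N_0(m) > m$ it is increasing on $[N_0(m),\infty)$ as well; it then suffices to prove the two facts $N_0(m) > m$ and $g(N_0(m)) > 0$, since positivity at the endpoint propagates to all $x \geq N_0(m)$. The first is quick, as $N_0(m)/m = 2\log m - \log\log m > 1$ for every $m \geq 2$.

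The real content is $g(N_0(m)) > 0$. Writing $N_0(m) = m\,(2\log m - \log\log m)$ and $\log N_0(m) = \log m + \log(2\log m - \log\log m)$, the inequality $N_0(m) > m\log N_0(m)$ reduces, after cancelling a factor $m$ and then exponentiating $\log m - \log\log m > \log(2\log m - \log\log m)$, to
$$\frac{m}{\log m} > 2\log m - \log\log m \qquad (m \geq 2).$$
This is where essentially all the work sits, and it is the step I expect to be delicate: the right-hand side is exactly the logarithmic ``target'' that $m/\log m$ must outgrow, and the margin is thin for moderate $m$ (near $m = 5$ the $-\log\log m$ term cannot be discarded, so crude bounds such as $2\log m - \log\log m \leq 2\log m$ fail). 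I would set $t = \log m$ to recast it as $e^{t}/t > 2t - \log t$, and then prove positivity of the gap $\phi(m) := m - 2(\log m)^2 + \log m\,\log\log m$ for $m \geq 2$ by checking the first several integers directly and using $\phi'$ to show $\phi$ is increasing once $m$ is past the small dip near $m = e$. Once $g(N_0(m)) > 0$ is secured, the monotonicity of $g$ on $[N_0(m),\infty)$ completes the proof.
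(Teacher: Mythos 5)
The paper itself offers no proof of this lemma: it is quoted verbatim from the reference [GZZ] (``in preparation''), so there is no argument in the text to compare yours against. Your self-contained proof is therefore welcome, and its skeleton is correct. The reduction to $g(x)=x-m\log x>0$, the monotonicity of $g$ on $[N_0(m),\infty)$ via $N_0(m)>m$ (indeed $2\log m-\log\log m>1.75$ for all $m\geq 2$), and the algebraic reduction of $g(N_0(m))>0$ to
$$\frac{m}{\log m}>2\log m-\log\log m \qquad (m\geq 2)$$
are all accurate; I verified the chain $N_0(m)>m\log N_0(m)\iff \log m-\log\log m>\log(2\log m-\log\log m)$ and it checks out. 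You are also right that this last inequality is the only place with real content and that the margin is thin: the gap $\frac{m}{\log m}-(2\log m-\log\log m)$ bottoms out at roughly $0.59$ near $m=5$ and $0.35$ for the normalized version $\phi(m)/\log m$ near $m=6$, so the inequality is true but cannot be obtained by dropping the $-\log\log m$ term. Two small caveats. First, the minimum of your $\phi(m)=m-2(\log m)^2+\log m\,\log\log m$ sits near $m\approx 5$ (one finds $\phi'(2)<0$ and $\phi'(5)>0$), not near $m=e$; this does not affect your plan of checking the first several integers and then proving $\phi'>0$ beyond the dip, but the constant you quote for where monotonicity kicks in should be adjusted. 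Second, the final step is still a plan rather than an executed proof --- you would need to actually bound $\phi'(m)=1-\frac{4\log m}{m}+\frac{\log\log m}{m}+\frac{1}{m}$ from below for, say, $m\geq 5$ (which is routine since $\frac{4\log m}{m}\leq \frac{4\log 5}{5}<1$ and is decreasing there) and tabulate $\phi(2),\dots,\phi(5)$. With those details filled in, the proof is complete.
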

Recall that $N_1(r)= \max\Biggl\{85, \Biggl\lceil \dfrac{4}{\pi^2}N^2_0(2r+2)\Biggr\rceil \Biggr\}$ (cf. \eqref{def2}).
\begin{lemma}\label{lem1}
For all $n \geq N_1(r)$ and $r \geq 1$,
\begin{equation}\label{lem1eqn1}
|G_r| < \dfrac{1}{n^{r+1}}.
\end{equation}
\end{lemma}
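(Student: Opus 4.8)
The plan is to write $g(n):=\widehat{R}(n)/\widehat{T}(n)$ and $f(n):=\log\bigl(1+g(n)\bigr)$, so that $G_r=(-1)^{r-1}\Delta^r f(n)$, and then to prove the much stronger \emph{pointwise} fact that $f$ itself is exponentially small. Once that is in hand, the bound on $|G_r|$ follows purely mechanically from the explicit expansion
$\Delta^r f(n)=\sum_{k=0}^r(-1)^{r-k}\binom{r}{k}f(n+k)$, which gives $|\Delta^r f(n)|\le\sum_{k=0}^r\binom{r}{k}|f(n+k)|\le 2^r\max_{0\le k\le r}|f(n+k)|$. The whole difficulty is therefore concentrated in a single scalar estimate for $g$.

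The first and main step is to establish the explicit inequality $|g(n)|<e^{-\pi\sqrt{n}/2}$ for all $n\ge 85$. For the denominator I would bound $\widehat{T}(n)$ from below: since $\widehat{\mu}(n)=\pi\sqrt{n}>28$ for $n\ge85$, one has $1-1/\widehat{\mu}(n)>0.96$, hence $\widehat{T}(n)>\tfrac{0.96}{8n}e^{\pi\sqrt{n}}$. For the numerator, the elementary part of $\widehat{R}(n)$ is at most $\tfrac{1.04}{8n}e^{-\pi\sqrt{n}}$, while the Rademacher tail is controlled through \eqref{Engel2} together with $\sinh x<\tfrac12 e^{x}$, giving $|R_2(n,3)|<\tfrac{3^{5/2}}{2\pi n^{3/2}}e^{\pi\sqrt{n}/3}$. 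Dividing, the dominant contribution to $|g(n)|$ is of order $n^{-1/2}e^{-2\pi\sqrt{n}/3}$; because $2/3>1/2$ the surplus decay rate $e^{-\pi\sqrt{n}/6}$ absorbs the polynomial prefactor once $n\ge85$, which yields $|g(n)|<e^{-\pi\sqrt{n}/2}$. In particular $|g(n)|<1/2$, so $1+g(n)>0$ and, using $|\log(1+x)|\le 2|x|$ for $|x|\le\tfrac12$, we get $|f(n)|\le 2|g(n)|<2e^{-\pi\sqrt{n}/2}$; note that this bound is decreasing in $n$.

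Combining the finite-difference inequality above with the monotonicity of the bound (so that $\max_{0\le k\le r}|f(n+k)|<2e^{-\pi\sqrt{n}/2}$) gives $|G_r|<2^{r+1}e^{-\pi\sqrt{n}/2}$. It remains to convert this exponential decay into the required power of $n$, and this is exactly where $N_1(r)$ enters. Since $n\ge N_1(r)\ge\tfrac{4}{\pi^2}N_0(2r+2)^2$, we have $\tfrac{\pi\sqrt{n}}{2}\ge N_0(2r+2)$, so Lemma \ref{lem0} applied with $m=2r+2$ and $x=\tfrac{\pi\sqrt{n}}{2}$ yields $\bigl(\tfrac{\pi\sqrt{n}}{2}\bigr)^{2r+2}e^{-\pi\sqrt{n}/2}<1$, that is, $e^{-\pi\sqrt{n}/2}<\bigl(\tfrac{2}{\pi\sqrt{n}}\bigr)^{2r+2}$. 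Substituting,
\begin{equation*}
|G_r| < 2^{r+1}\left(\frac{2}{\pi\sqrt{n}}\right)^{2r+2} = \left(\frac{8}{\pi^2}\right)^{r+1}\frac{1}{n^{r+1}} < \frac{1}{n^{r+1}},
\end{equation*}
the last inequality holding because $8/\pi^2<1$.

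The main obstacle is the pointwise estimate $|g(n)|<e^{-\pi\sqrt{n}/2}$ with the small explicit threshold $85$; everything afterward is bookkeeping. The delicate point there is to bound $R_2(n,3)$ sharply enough that, after dividing its growth rate $e^{\pi\sqrt{n}/3}$ by the main growth $e^{\pi\sqrt{n}}$ of $\widehat{T}(n)$, one is left with a decay rate strictly faster than $e^{-\pi\sqrt{n}/2}$. It is worth emphasizing that the choice $m=2r+2$ built into $N_1(r)$ is precisely what produces the constant $(8/\pi^2)^{r+1}<1$, making the final comparison uniform in $r$.
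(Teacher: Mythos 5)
Your proposal is correct and follows essentially the same route as the paper: bound $|\widehat{R}(n)/\widehat{T}(n)|$ by a constant times $e^{-\pi\sqrt{n}/2}$ for $n\ge 85$ using the Engel error bound with $N=3$ and $\sinh x<\tfrac12 e^{x}$, pass to $|\log(1+x)|\le 2|x|$, expand $\Delta^r$ via binomial coefficients to pick up a factor $2^{r}$, and invoke Lemma \ref{lem0} with $m=2r+2$ so that the resulting constant $(8/\pi^{2})^{r+1}$ is less than $1$. The only differences are cosmetic bookkeeping (your pointwise bound carries an extra factor of $2$ relative to the paper's $\tfrac12 e^{-\widehat{\mu}(n)/2}$, which the final constant still absorbs).
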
 
\begin{proof}
Define $\widehat{e}(n):= \frac{\widehat{R}(n)}{\widehat{T}(n)}$. From the definition of $\widehat{R}(n)$ and $\widehat{T}(n)$ (cf. Equation \eqref{eqn2}-\eqref{eqn3}), we have
\begin{eqnarray}\label{lem1eqn3}
|\widehat{e}(n)|&=&\dfrac{|\widehat{R}(n)|}{|\widehat{T}(n)|}\nonumber\\
&=& \Biggl|\dfrac{\frac{1}{8n}\Bigl(1+\frac{1}{\widehat{\mu}(n)}\Bigr) e^{-\widehat{\mu}(n)}+R_2(n,3)}{\frac{1}{8n}\Bigl(1-\frac{1}{\widehat{\mu}(n)}\Bigr) e^{\widehat{\mu}(n)}}\Biggr|\nonumber\\
&<& \dfrac{\widehat{\mu}(n)+1}{\widehat{\mu}(n)-1}e^{-2\widehat{\mu}(n)}+ \dfrac{36\sqrt{3}}{\widehat{\mu}(n)-1}e^{-2\widehat{\mu}(n)/3}\nonumber\\
& & \hspace{4 cm} \ \ \Bigl(\text{using}\ N=3\ \text{in}\ \eqref{Engel2}\ \text{and}\ \sinh(x)<\frac{e^x}{2}\ \text{for}\ x>0\Bigr)\nonumber\\
&=& \dfrac{1}{\widehat{\mu}(n)-1}e^{-\widehat{\mu}(n)/2} \Bigl((\widehat{\mu}(n)+1)e^{-3\widehat{\mu}(n)/2}+36\sqrt{3}\ e^{-\widehat{\mu}(n)/6}\Bigr).
\end{eqnarray}

Since for all $n \geq 85$, $$(\widehat{\mu}(n)+1)e^{-3\widehat{\mu}(n)/2}+36\sqrt{3}\ e^{-\widehat{\mu}(n)/6}<\frac{1}{2}\ \text{and}\  \frac{1}{\widehat{\mu}(n)-1}<1,$$ 
from \eqref{lem1eqn3}, it follows that for all $n \geq 85$,
\begin{equation}\label{lem1eqn4}
|\widehat{e}(n)|<\dfrac{1}{2}\ e^{-\widehat{\mu}(n)/2}.
\end{equation}
Therefore, for all $n \geq 85$,
\begin{eqnarray}\label{lem1eqn5}
|G_r| &=& \Bigl|\sum_{i=0}^{r}(-1)^{r-1}\Delta^r \log\ (1+\widehat{e}(n))\Bigr| \ \ (\text{by}\  \eqref{eqn6})\nonumber\\
&=& \Biggl|\sum_{i=0}^{r}(-1)^{r-i}\binom{r}{i}\log\ (1+\widehat{e}(n+i))\Biggr|\nonumber\\
&\leq& \sum_{i=0}^{r} \binom{r}{i} \Bigl|\log\ (1+\widehat{e}(n+i))\Bigr|\nonumber\\
&\leq& \sum_{i=0}^{r} \binom{r}{i} \dfrac{|\widehat{e}(n+i)|}{1-|\widehat{e}(n+i)|}\ \ \Bigl(\text{since},\  |\log (1+x)|\leq \dfrac{|x|}{1-|x|}\ \text{for}\ |x|<1\Bigr)\nonumber\\
& \leq & 2\sum_{i=0}^{r} \binom{r}{i} |\widehat{e}(n+i)|\ \Bigl(\text{as}\ \dfrac{x}{1-x}\leq 2x\ \text{for}\ 0<x\leq\frac{1}{2}\Bigr)\nonumber\\
& < & \sum_{i=0}^{r} \binom{r}{i} e^{-\widehat{\mu}(n+i)/2} \ (\text{by}\ \eqref{lem1eqn4})\nonumber\\
& \leq & \sum_{i=0}^{r} \binom{r}{i} e^{-\widehat{\mu}(n)/2} \ \Bigl(\text{since},\ \{e^{-\widehat{\mu}(n)/2}\}_{n \geq 1}\ \text{is a decreasing sequence}\Bigr)\nonumber\\
&=& 2^r e^{-\widehat{\mu}(n)/2}.
\end{eqnarray}
Now applying Lemma \ref{lem0} with $m=2r+2$ and assigning $x \mapsto \frac{\widehat{\mu}(n)}{2}$, it follows that for $n \geq \Biggl\lceil \dfrac{4}{\pi^2}N^2_0(2r+2)\Biggr\rceil$,
\begin{equation}\label{lem1eqn7}
 e^{-\widehat{\mu}(n)/2}< \Bigl(\dfrac{2}{\pi}\Bigr)^{2r+2}\dfrac{1}{n^{r+1}} \implies  2^{r} e^{-\widehat{\mu}(n)/2} < \Bigl(\dfrac{2\sqrt{2}}{\pi}\Bigr)^{2r+2}\dfrac{1}{n^{r+1}}<\dfrac{1}{n^{r+1}}.
\end{equation}
\end{proof}

Before we state the bounds for $H_r$, we recall the following result due to Odlyzko \cite{Odlyzko} on the relation between the higher order differences of a smooth function and its derivatives.
\begin{proposition}\label{Odprop}
Let $r$ be a positive integer. Suppose that $f(x)$ is a function with infinite continuous derivatives for $x\geq 1$, and $(-1)^{k-1}f^{(k)}(x)>0$ for $k \geq 1$. Then for $r\geq 1$,
$$(-1)^{r-1}f^{(r)}(x+r)\leq (-1)^{r-1}\Delta^r f(x) \leq (-1)^{r-1}f^{(r)}(x).$$	
\end{proposition}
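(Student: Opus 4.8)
The plan is to realize the $r$-th finite difference as an average of the $r$-th derivative over a cube, so that the stipulated monotonicity of $(-1)^{r-1}f^{(r)}$ forces the two-sided bound directly.

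First I would establish the integral representation
$$\Delta^r f(x) = \int_0^1 \cdots \int_0^1 f^{(r)}(x + t_1 + \cdots + t_r)\, dt_1 \cdots dt_r$$
by induction on $r$. The base case $r=1$ is the fundamental theorem of calculus: $\Delta f(x) = f(x+1)-f(x) = \int_0^1 f'(x+t)\, dt$. For the inductive step I would write $\Delta^r f = \Delta^{r-1}(\Delta f)$ and insert $\Delta f(x)=\int_0^1 f'(x+t)\,dt$; since $\Delta^{r-1}$ is a fixed finite linear combination of shift operators it commutes with the $t$-integration, giving $\Delta^r f(x) = \int_0^1 (\Delta^{r-1} f')(x+t)\, dt$. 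Applying the induction hypothesis to $f'$ and merging the integrals yields the claimed $r$-fold integral.

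Next I would record the two sign facts flowing from the hypothesis $(-1)^{k-1}f^{(k)}(x)>0$: the function $g(x) := (-1)^{r-1}f^{(r)}(x)$ is strictly positive, and its derivative $g'(x) = (-1)^{r-1}f^{(r+1)}(x) = -\bigl((-1)^{r}f^{(r+1)}(x)\bigr)$ is strictly negative, so $g$ is strictly decreasing on $[1,\infty)$. Multiplying the integral identity by $(-1)^{r-1}$ then gives
$$(-1)^{r-1}\Delta^r f(x) = \int_{[0,1]^r} g(x + t_1 + \cdots + t_r)\, dt_1\cdots dt_r.$$
Since each $t_i \in [0,1]$, the argument lies in $[x,x+r]$, and monotonicity of $g$ yields the pointwise bounds $g(x+r) \leq g(x+t_1+\cdots+t_r) \leq g(x)$. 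Integrating over the unit cube, which has total measure $1$, preserves these inequalities and produces exactly $(-1)^{r-1}f^{(r)}(x+r) \leq (-1)^{r-1}\Delta^r f(x) \leq (-1)^{r-1}f^{(r)}(x)$.

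The only point requiring care is the inductive step: justifying the interchange of $\Delta^{r-1}$ with the integral and tracking the domain of integration so that monotonicity is applied in the correct direction. But because $\Delta^{r-1}$ is a finite sum of translations, the interchange is immediate, so I expect no genuine obstacle here. As an alternative that avoids the integral representation altogether, one could invoke the mean value theorem for divided differences, which gives $\Delta^r f(x) = f^{(r)}(\xi)$ for some $\xi \in (x,x+r)$, and then apply the monotonicity of $g$ to $\xi$ directly.
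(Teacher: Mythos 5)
Your argument is correct. The paper does not prove this proposition at all — it is simply quoted from Odlyzko's paper (and the same statement appears as a lemma in Chen--Wang--Xie for the partition function) — and the integral representation of $\Delta^r f(x)$ as the average of $f^{(r)}$ over the unit cube, combined with the monotonicity of $(-1)^{r-1}f^{(r)}$, is exactly the standard argument used in those sources, so your proof is the expected one.
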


\begin{lemma}\label{lem2}
For all $n \geq 1$,
\begin{equation}\label{lem2eqn1}
L^{(1)}(n) \leq H_1 \leq U^{(1)}(n),
\end{equation}
where 
\begin{eqnarray}\label{lem2eqn2}
U^{(1)}(n) &=& \dfrac{\pi}{2\sqrt{n}}-\dfrac{3}{2(n+1)}+\dfrac{\pi}{2\sqrt{n}(\widehat{\mu}(n)-1)}\\ \label{lem2eqn3}
\text{and}\  L^{(1)}(n) &=& \dfrac{\pi}{2\sqrt{n+1}}-\dfrac{3}{2n}+\dfrac{\pi}{2\sqrt{n+1}(\widehat{\mu}(n+1)-1)}.
\end{eqnarray}
\end{lemma}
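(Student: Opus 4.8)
The plan is to write $H_1$ as a first difference of a smooth function and then bound that difference by the mean value theorem applied piecewise. By \eqref{eqn5} with $r=1$ we have $H_1 = \Delta f(n) = f(n+1)-f(n)$, where $f(x) = -3\log\widehat{\mu}(x) + \log(\widehat{\mu}(x)-1) + \widehat{\mu}(x)$ and $\widehat{\mu}(x) = \pi\sqrt{x}$. I would split $f = g_1 + g_2 + g_3$ with $g_1(x) = -3\log\widehat{\mu}(x)$, $g_2(x) = \log(\widehat{\mu}(x)-1)$, and $g_3(x) = \widehat{\mu}(x)$, and treat each summand separately, because these three pieces do not all share the alternating sign pattern required by Proposition \ref{Odprop}.

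First I would record the three derivatives $g_1'(x) = -\frac{3}{2x}$, $g_3'(x) = \frac{\pi}{2\sqrt{x}}$, and $g_2'(x) = \frac{\pi}{2\sqrt{x}(\widehat{\mu}(x)-1)} = \frac{\pi}{2}(\pi x - \sqrt{x})^{-1}$. The crux is the monotonicity of each $g_i'$ on $[1,\infty)$, governed by the sign of $g_i''$. For $g_3$ we have $g_3''(x) = -\frac{\pi}{4}x^{-3/2} < 0$, so $g_3'$ is decreasing; for $g_2$, writing $D(x) = \pi x - \sqrt{x}$, one finds $g_2''(x) = -\frac{\pi}{2}D'(x)/D(x)^2$ with $D(x) > 0$ and $D'(x) = \pi - \frac{1}{2\sqrt{x}} > 0$ for $x\ge 1$, so $g_2'$ is decreasing as well. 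By contrast $g_1''(x) = \frac{3}{2x^2} > 0$, so $g_1'$ is increasing; this is precisely the term whose ``wrong'' sign obstructs a direct application of Proposition \ref{Odprop} to $f$ and forces the decomposition.

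Applying the mean value theorem to each piece on $[n,n+1]$ (this is exactly the $r=1$ instance of the derivative–difference comparison of Proposition \ref{Odprop} for the pieces with the correct sign), the two decreasing derivatives yield $g_i'(n+1) \le \Delta g_i(n) \le g_i'(n)$ for $i=2,3$, while the increasing derivative yields the reversed inequality $g_1'(n) \le \Delta g_1(n) \le g_1'(n+1)$. Summing the three upper bounds gives $H_1 \le \frac{\pi}{2\sqrt{n}} - \frac{3}{2(n+1)} + \frac{\pi}{2\sqrt{n}(\widehat{\mu}(n)-1)} = U^{(1)}(n)$, and summing the three lower bounds gives $H_1 \ge \frac{\pi}{2\sqrt{n+1}} - \frac{3}{2n} + \frac{\pi}{2\sqrt{n+1}(\widehat{\mu}(n+1)-1)} = L^{(1)}(n)$, matching exactly \eqref{lem2eqn2}--\eqref{lem2eqn3}. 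Throughout one needs $\widehat{\mu}(x)-1 = \pi\sqrt{x}-1 > 0$, which holds for $x\ge 1$ since $\pi > 1$, so that $g_2$ and all denominators are well defined.

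The computations are elementary, so there is no serious obstacle; the two points demanding care are the sign analysis of $g_2''$ and, more conceptually, recognizing that the $-3\log\widehat{\mu}$ term must be peeled off and handled with the \emph{reversed} mean-value inequality rather than folded into a single application of Proposition \ref{Odprop}. Correctly tracking these opposite inequality directions when summing is what produces the asymmetric pair $U^{(1)}, L^{(1)}$, and in particular the appearance of $n$ in the $-\frac{3}{2n}$ term of $L^{(1)}$ versus $n+1$ in the $-\frac{3}{2(n+1)}$ term of $U^{(1)}$.
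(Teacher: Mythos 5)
Your proposal is correct and follows essentially the same route as the paper: the paper's one-line proof also decomposes $H_1$ into the three summands of \eqref{eqn5} and compares each first difference with the corresponding derivative via Proposition \ref{Odprop} (your mean-value-theorem argument is exactly the $r=1$ instance of that comparison). Your explicit handling of the $-3\log\widehat{\mu}$ term, whose increasing derivative reverses the inequality, is a detail the paper leaves implicit but is needed to get the stated $U^{(1)}$ and $L^{(1)}$.
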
 
\begin{proof}
Equation \eqref{lem2eqn1} follows immediately by applying Proposition \ref{Odprop} on each of the factors in $H_r$ being present in \eqref{eqn5} for $r=1$.
\end{proof}

\begin{lemma}\label{lem3}
For $r \geq 2$ and $n \geq 2r^2$,
\begin{equation}\label{lem3eqn1}
\dfrac{C(r)}{n^{r-\frac{1}{2}}}-\dfrac{C_1(r)}{n^{r}}< H_r < \dfrac{C(r)}{n^{r-\frac{1}{2}}}-\dfrac{(r-1)!}{2^r\ n^r}+\dfrac{C_2(r)}{n^{r+\frac{1}{2}}},
\end{equation}
where
$C(r)$, $C_1(r)$, and $C_2(r)$ are given by \eqref{def3}-\eqref{def5}.
\end{lemma}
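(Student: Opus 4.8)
The plan is to exploit the linearity of $\Delta^r$ together with Odlyzko's estimate (Proposition \ref{Odprop}) applied \emph{separately} to three elementary functions, so that the sign hypothesis need only be checked piece by piece (the combined function need not satisfy it near $x=1$). Since $\widehat{\mu}(n)=\pi\sqrt n$, I would first rewrite the bracketed expression in \eqref{eqn5} as
$$-3\log\widehat{\mu}(n)+\log(\widehat{\mu}(n)-1)=-2\log\pi-\log n+\log\Bigl(1-\frac{1}{\pi\sqrt n}\Bigr),$$
so that, discarding the constant which $\Delta^r$ annihilates for $r\ge1$,
$$H_r=(-1)^{r-1}\Delta^r\bigl[\pi\sqrt n\bigr]-(-1)^{r-1}\Delta^r\bigl[\log n\bigr]+(-1)^{r-1}\Delta^r\Bigl[\log\bigl(1-(\pi\sqrt n)^{-1}\bigr)\Bigr].$$
Writing $f_a(x)=\pi\sqrt x$, $f_b(x)=\log x$ and $f_c(x)=\log(1-(\pi\sqrt x)^{-1})$, each is smooth on $[1,\infty)$, and one checks directly that $(-1)^{k-1}f_a^{(k)}>0$, $(-1)^{k-1}f_b^{(k)}>0$, and---expanding $f_c(x)=-\sum_{j\ge1}(j\pi^j)^{-1}x^{-j/2}$ and differentiating termwise---that $(-1)^{k-1}f_c^{(k)}(x)=\sum_{j\ge1}\frac{(j/2)_k}{j\pi^j}x^{-k-j/2}>0$ for $x\ge1$ and every $k\ge1$. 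Hence Proposition \ref{Odprop} applies to each of the three pieces.

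Next I would record the $r$-th derivatives under the sign normalization: $(-1)^{r-1}f_a^{(r)}(x)=C(r)x^{1/2-r}$ (this is exactly $C(r)$ in \eqref{def3}), $(-1)^{r-1}f_b^{(r)}(x)=(r-1)!\,x^{-r}$, and $(-1)^{r-1}f_c^{(r)}(x)=S(x):=\sum_{j\ge1}\frac{(j/2)_r}{j\pi^j}x^{-r-j/2}$. Feeding these into the two-sided bound of Proposition \ref{Odprop}, flipping the inequality on the $f_b$-term because of its minus sign, and adding, I obtain
$$C(r)(n+r)^{1/2-r}-(r-1)!\,n^{-r}+S(n+r)\le H_r\le C(r)\,n^{1/2-r}-(r-1)!\,(n+r)^{-r}+S(n).$$

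For the lower bound I would drop the positive term $S(n+r)$ and absorb the deficit $C(r)\bigl(n^{1/2-r}-(n+r)^{1/2-r}\bigr)$ into the slack $4r^2C(r)n^{-r}$ built into $C_1(r)=(r-1)!+4r^2C(r)$ (see \eqref{def4}): by the mean value theorem $n^{1/2-r}-(n+r)^{1/2-r}\le(r-\tfrac12)r\,n^{-1/2-r}<4r^2n^{-r}$ for every $n\ge1$, giving $H_r>C(r)n^{1/2-r}-C_1(r)n^{-r}$ with room to spare. For the upper bound I would first replace $-(r-1)!(n+r)^{-r}$ by the weaker $-\frac{(r-1)!}{2^r n^r}$, legitimate because $n\ge r$ forces $n+r\le 2n$; it then remains to prove $S(n)\le C_2(r)\,n^{-r-1/2}$ with $C_2(r)$ as in \eqref{def5}. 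Writing $S(n)=n^{-r-1/2}\sum_{k\ge0}\frac{((k+1)/2)_r}{(k+1)\pi^{k+1}}n^{-k/2}$, the hypothesis $n\ge2r^2\ge r^2$ gives $n^{-k/2}\le r^{-k}$, which bounds the head $0\le k\le 2r-2$ by precisely the finite sum in $C_2(r)$; for the tail $k\ge2r-1$ the elementary estimate $((k+1)/2)_r\le k^r$ together with $n^{-k/2}\le(2r^2)^{-k/2}$ reduces it to a convergent geometric-type series bounded by $\frac{r}{10^r}$, matching the remaining summand of $C_2(r)$.

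The main obstacle I anticipate is this tail estimate: one must take the cut-off at $k=2r-1$ so that the Pochhammer bound $((k+1)/2)_r\le k^r$ becomes available, and then check that the geometric decay supplied by the factor $(2r^2)^{-k/2}$---this is exactly where the constant $2r^2$ (rather than $r^2$) in the hypothesis is used---is fast enough to sum below $\frac{r}{10^r}$ uniformly in $r$. The remaining ingredients---the three sign conditions, the termwise differentiation of the logarithmic series, and the mean value estimate---are routine once the decomposition is in place, and strictness of the stated inequalities follows from dropping $S(n+r)>0$ in the lower bound and from the built-in slack in $C_2(r)$ in the upper bound.
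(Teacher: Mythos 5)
Your proposal follows essentially the same route as the paper: the same decomposition of $H_r$ into the pieces $\pi\sqrt{n}$, $-\log n$ and the logarithmic series, the same piecewise application of Proposition \ref{Odprop}, and the same replacement of $(n+r)^{-r}$ by $(2n)^{-r}$ together with the same head/tail split of the series against the two summands of $C_2(r)$. The only deviations are cosmetic --- you invoke the mean value theorem where the paper expands $(1+r/n)^{1/2-r}$ as a binomial series to absorb the $4r^2C(r)n^{-r}$ slack, and you use the Pochhammer bound $((k+1)/2)_r\le k^r$ in place of the paper's $(k/2)_r\le(k/2)^re^{r(r-1)/k}$ for the tail --- and both variants close correctly.
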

\begin{proof}
Rewrite \eqref{eqn5} as 
\begin{equation}\label{lem3eqn2}
H_r=(-1)^{r-1}\Delta^r (\widehat{\mu}(n)-2\log \widehat{\mu}(n)) -\sum_{k=1}^{\infty} (-1)^{r-1}\Delta^r \Bigl(\dfrac{1}{k \widehat{\mu}(n)^k}\Bigr)
\end{equation}
and applying Proposition \ref{Odprop}, we get
\begin{equation}\label{lem3eqn3}
\begin{split}
\dfrac{\pi}{2}\Bigl(\dfrac{1}{2}\Bigr)_{r-1}\dfrac{1}{(n+r)^{r-\frac{1}{2}}}-\dfrac{(r-1)!}{n^r}&+\sum_{k=1}^{\infty}\dfrac{1}{k\pi^k}\Bigl(\dfrac{k}{2}\Bigr)_r \dfrac{1}{(n+r)^{r+\frac{k}{2}}}\leq H_r\\
& \leq \dfrac{\pi}{2}\Bigl(\dfrac{1}{2}\Bigr)_{r-1}\dfrac{1}{n^{r-\frac{1}{2}}}-\dfrac{(r-1)!}{(n+r)^r}+\sum_{k=1}^{\infty}\dfrac{1}{k\pi^k}\Bigl(\dfrac{k}{2}\Bigr)_r \dfrac{1}{n^{r+\frac{k}{2}}}.
\end{split}
\end{equation}
Since for all positive integers $n$, $r$ and $k$, $$\sum_{k=1}^{\infty}\dfrac{1}{k\pi^k}\Bigl(\dfrac{k}{2}\Bigr)_r \dfrac{1}{(n+r)^{r+\frac{k}{2}}}>0.$$
Therefore, 
\begin{equation}\label{lem3eqn4}
\dfrac{\pi}{2}\Bigl(\dfrac{1}{2}\Bigr)_{r-1}\dfrac{1}{(n+r)^{r-\frac{1}{2}}}-\dfrac{(r-1)!}{n^r}< H_r \leq \dfrac{\pi}{2}\Bigl(\dfrac{1}{2}\Bigr)_{r-1}\dfrac{1}{n^{r-\frac{1}{2}}}-\dfrac{(r-1)!}{(n+r)^r}+\sum_{k=1}^{\infty}\dfrac{1}{k\pi^k}\Bigl(\dfrac{k}{2}\Bigr)_r \dfrac{1}{n^{r+\frac{k}{2}}}.
\end{equation}
Now we further investigate the lower bound of $H_r$, given in \eqref{lem3eqn4}.
\begin{eqnarray}\label{lem3eqn5}
H_r &\geq & \dfrac{\pi}{2}\Bigl(\dfrac{1}{2}\Bigr)_{r-1}\dfrac{1}{(n+r)^{r-\frac{1}{2}}}-\dfrac{(r-1)!}{n^r}\nonumber\\
&=& \dfrac{\pi}{2}\Bigl(\dfrac{1}{2}\Bigr)_{r-1}\dfrac{1}{n^{r-\frac{1}{2}}}\Bigl(1+\dfrac{r}{n}\Bigr)^{-r+\frac{1}{2}}-\dfrac{(r-1)!}{n^r}\nonumber\\
&=& \dfrac{\pi}{2}\Bigl(\dfrac{1}{2}\Bigr)_{r-1}\dfrac{1}{n^{r-\frac{1}{2}}}+\dfrac{\pi}{2}\Bigl(\dfrac{1}{2}\Bigr)_{r-1}\dfrac{1}{n^{r-\frac{1}{2}}} \sum_{m=1}^{\infty}\binom{-\frac{2r-1}{2}}{m}\Bigl(\dfrac{r}{n}\Bigr)^m-\dfrac{(r-1)!}{n^r}.
\end{eqnarray}
To bound the infinite series in \eqref{lem3eqn5}, we proceed as follows
\begin{eqnarray}\label{lem3eqn6}
\Biggl|\sum_{m=1}^{\infty}\binom{-\frac{2r-1}{2}}{m}\Bigl(\dfrac{r}{n}\Bigr)^m\Biggr| &=& \Biggl|\sum_{m=1}^{\infty}\dfrac{(-1)^m}{4^m}\dfrac{\binom{2r+2m-2}{r+m-1}\binom{r+m-1}{r-1}}{\binom{2r-2}{r-1}}\Bigl(\dfrac{r}{n}\Bigr)^m\Biggr|\nonumber\\
&\leq& \sum_{m=1}^{\infty} \dfrac{1}{4^m} \dfrac{\binom{2r+2m-2}{r+m-1}\binom{r+m-1}{r-1}}{\binom{2r-2}{r-1}}\Bigl(\dfrac{r}{n}\Bigr)^m \nonumber\\
& \leq & \sum_{m=1}^{\infty} \dfrac{2\sqrt{r-1}}{\sqrt{\pi(r+m-1)}} \binom{r+m-1}{r-1}\Bigl(\dfrac{r}{n}\Bigr)^m\nonumber\\
& & \hspace{3 cm} \Biggl(\text{since},\ \dfrac{4^k}{2\sqrt{k}}\leq \binom{2k}{k}\leq \dfrac{4^k}{\sqrt{\pi k}}\ \forall\ k \geq 1\Biggr)\nonumber\\
& < &  \dfrac{2r}{n}\sum_{m=0}^{\infty} \binom{r+m}{r-1}\Bigl(\dfrac{r}{n}\Bigr)^m\nonumber\\
&\leq & \dfrac{2r}{n}\sum_{m=0}^{\infty}r^{m+1}\Bigl(\dfrac{r}{n}\Bigr)^m\ \Biggl(\text{as},\ \binom{r+m}{r-1}\leq r^{m+1}\Biggr)\nonumber\\
&=& \dfrac{2r^2}{n}\sum_{m=0}^{\infty}\Bigl(\dfrac{r^2}{n}\Bigr)^m \leq \dfrac{4r^2}{n} \ \ \text{for all}\ \ n \geq 2r^2.
\end{eqnarray}
From \eqref{lem3eqn5} and \eqref{lem3eqn6}, it follows that for $n \geq 2r^2$,
\begin{eqnarray}\label{lem3eqn7}
H_r &\geq & \dfrac{\pi}{2}\Bigl(\dfrac{1}{2}\Bigr)_{r-1}\dfrac{1}{n^{r-\frac{1}{2}}}-\dfrac{\pi}{2}\Bigl(\dfrac{1}{2}\Bigr)_{r-1}\dfrac{4r^2}{n^{r+\frac{1}{2}}}-\dfrac{(r-1)!}{n^r}\nonumber\\
&>& \dfrac{\pi}{2}\Bigl(\dfrac{1}{2}\Bigr)_{r-1}\dfrac{1}{n^{r-\frac{1}{2}}}-\Bigl((r-1)!+2\pi r^2\Bigl(\dfrac{1}{2}\Bigr)_{r-1}\Bigr)\dfrac{1}{n^r}.
\end{eqnarray}
This finishes the estimation of the lower bound for $H_r$.\\
For the upper bound estimation of $H_r$, we start with \eqref{lem3eqn4} in the following way
\begin{eqnarray}\label{lem3eqn8}
H_r &\leq & \dfrac{C(r)}{n^{r-\frac{1}{2}}}-\dfrac{(r-1)!}{(n+r)^r}+\sum_{k=1}^{\infty}\dfrac{1}{k\pi^k}\Bigl(\dfrac{k}{2}\Bigr)_r \dfrac{1}{n^{r+\frac{k}{2}}}\nonumber\\
& < & \dfrac{C(r)}{n^{r-\frac{1}{2}}}-\dfrac{(r-1)!}{(2n)^r}+\sum_{k=1}^{\infty}\dfrac{1}{k\pi^k}\Bigl(\dfrac{k}{2}\Bigr)_r \dfrac{1}{n^{r+\frac{k}{2}}}\ \ \Bigl(\text{since},\ \dfrac{1}{(n+r)^r}> \dfrac{1}{(2n)^r}\ \forall\ n>r\Bigr)\nonumber\\
&=& \dfrac{C(r)}{n^{r-\frac{1}{2}}}-\dfrac{(r-1)!}{(2n)^r}+\dfrac{1}{n^{r+\frac{1}{2}}}\sum_{k=0}^{2r-2}\dfrac{1}{(k+1)\pi^{k+1}}\Bigl(\dfrac{k+1}{2}\Bigr)_r \dfrac{1}{\sqrt{n}^k}+\dfrac{1}{n^{r+\frac{1}{2}}}\sum_{k=2r}^{\infty}\dfrac{1}{k\pi^k}\Bigl(\dfrac{k}{2}\Bigr)_r \dfrac{1}{\sqrt{n}^{k-1}}\nonumber\\
& \leq & \dfrac{C(r)}{n^{r-\frac{1}{2}}}-\dfrac{(r-1)!}{(2n)^r}+\dfrac{1}{n^{r+\frac{1}{2}}}\underset{:=\widehat{C_2}(r)}{\underbrace{\sum_{k=0}^{2r-2}\dfrac{1}{(k+1)\pi^{k+1}}\Bigl(\dfrac{k+1}{2}\Bigr)_r \dfrac{1}{r^k}}}+\dfrac{r}{n^{r+\frac{1}{2}}}\underset{:=S(r)}{\underbrace{\sum_{k=2r}^{\infty}\dfrac{1}{k\pi^k}\Bigl(\dfrac{k}{2}\Bigr)_r \dfrac{1}{r^k}}}\\
& & \hspace{9 cm} \Bigl(\text{since},\ \dfrac{1}{\sqrt{n}^k} \leq \dfrac{1}{r^k} \forall\ n \geq r^2\Bigr).\nonumber
\end{eqnarray}
In order to estimate the infinite series $S(r)$, we need to give an upper bound of $\Bigl(\dfrac{k}{2}\Bigr)_r$ by rewriting as
\begin{equation*}
\Bigl(\dfrac{k}{2}\Bigr)_r=\Bigl(\dfrac{k}{2}\Bigr)^r \prod_{i=0}^{r-1}\Bigl(1+\dfrac{2i}{k}\Bigr):=\Bigl(\dfrac{k}{2}\Bigr)^r P(r,k).
\end{equation*}
Now,
\begin{equation}\label{lem3eqn9}
\log P(r,k) =\sum_{i=0}^{r-1}\log \Bigl(1+\dfrac{2i}{k}\Bigr)<\sum_{i=0}^{r-1}\dfrac{2i}{k}=\dfrac{r(r-1)}{k} \ \implies P(r,k) < e^{\frac{r(r-1)}{k}}.
\end{equation}
Using \eqref{lem3eqn9}, we obtain
\begin{equation}\label{lem3eqn10}
S(r) < \sum_{k=2r}^{\infty}\dfrac{1}{k\pi^k}\Bigl(\dfrac{k}{2}\Bigr)^r e^{\frac{r(r-1)}{k}} \dfrac{1}{r^k} \leq \dfrac{e^{\frac{r-1}{2}}}{2^r}\sum_{k=2r}^{\infty}\dfrac{k^{r-1}}{(\pi r)^k}\ \ \Bigl(\text{since}, e^{\frac{r(r-1)}{k}} \leq e^{\frac{r-1}{2}} \forall\ k \geq 2r\Bigr).
\end{equation}
Moreover, $k^{r-1}< r^{k}$ for all $r \geq 2$ and $k \geq 2r$. To observe this fact, we first note that to prove $k^{r-1}< r^{k}$, it is equivalent to show 
\begin{equation}\label{lem3eqn11}
\dfrac{r-1}{\log r}< \dfrac{k}{\log k}.
\end{equation}
Define $f(x):=\dfrac{x}{\log x}$ and observe that $f(x)$ is strictly increasing for all $x>e$. As $k \geq 2r \geq 4 >e$, it follows that $f(k)>f(2r)$ and the fact that $f(2r) > \dfrac{r-1}{\log r}$ for $r \geq 2$, we conclude \eqref{lem3eqn11}.\\
Applying \eqref{lem3eqn11} in \eqref{lem3eqn10}, we get
\begin{equation}\label{lem3eqn12}
S(r) < \dfrac{e^{\frac{r-1}{2}}}{2^r}\sum_{k=2r}^{\infty}\dfrac{1}{\pi^k} =\dfrac{\pi}{\sqrt{e}(\pi-1)}\Bigl(\dfrac{\sqrt{e}}{2 \ r^2}\Bigr)^r < \dfrac{1}{10^r}.
\end{equation} 
Hence, by \eqref{lem3eqn12} and \eqref{lem3eqn8}, we obtain for all $n \geq r^2$,
\begin{equation}\label{lem3eqn13}
H_r < \dfrac{C(r)}{n^{r-\frac{1}{2}}}-\dfrac{(r-1)!}{2^r\ n^r}+\dfrac{\widehat{C_2}(r)}{n^{r+\frac{1}{2}}}+\dfrac{r}{10^r\ n^{r+\frac{1}{2}}}=\dfrac{C(r)}{n^{r-\frac{1}{2}}}-\dfrac{(r-1)!}{2^r\ n^r}+ \underset{=C_2(r)}{\underbrace{\Bigl(\widehat{C_2}(r)+\dfrac{r}{10^r}\Bigr)}}\dfrac{1}{n^{r+\frac{1}{2}}}.
\end{equation}

\end{proof}
\section{Proof of Theorem \ref{mainresult1} and \ref{mainresult2} }\label{sec3}

\emph{Proof of Theorem \ref{mainresult1}}:
Applying \eqref{lem2eqn1} and \eqref{lem1eqn1} in \eqref{eqn8}, we have for $n \geq 85$,
\begin{equation}\label{rslt1eqn1}
L^{(1)}(n)-\dfrac{1}{n^2}<\Delta \log \p(n) < U^{(1)}(n)+\dfrac{1}{n^2}.
\end{equation}
It is straightforward to show that for $n \geq 457$,
\begin{equation}\label{rslt1eqn2}
-\dfrac{3}{2(n+1)}+\dfrac{\pi}{2\sqrt{n}(\widehat{\mu}(n)-1)}+ \dfrac{1}{n^2}< -\dfrac{\pi^2}{10n}
\end{equation}
and therefore
\begin{equation}\label{rslt1eqn3}
U^{(1)}(n)+\dfrac{1}{n^2} <\dfrac{\pi}{2\sqrt{n}}-\dfrac{\pi^2}{10n}.
\end{equation}
Define $c_n:=\dfrac{\pi}{2\sqrt{n}}-\dfrac{\pi^2}{10n}$ and $d_n:=\dfrac{\pi}{2\sqrt{n}}+\dfrac{\pi^2}{40n}$. Observe that $c_n<1$ for $n \geq 1$ and $d_n<1$ for $n \geq 3$ and consequently for $n \geq 3$,
\begin{equation}\label{rslt1eqn4}
c_n < d_n-\dfrac{d^2_n}{2}+\dfrac{d^3_n}{3}-\dfrac{d^4_n}{4}< \log (1+d_n)
\end{equation}
since, $\log(1+x)>x-\dfrac{x^2}{2}+\dfrac{x^3}{3}-\dfrac{x^4}{4}\ \text{for}\ x>0$. Invoking \eqref{rslt1eqn3} and \eqref{rslt1eqn4} in \eqref{rslt1eqn1}, we get for $n \geq 457$,
\begin{equation}\label{rslt1eqn5}
\Delta \log \p(n) < \log \Bigl(1+\dfrac{\pi}{2\sqrt{n}}+\dfrac{\pi^2}{40n}\Bigr).
\end{equation}
Similarly as before, it can be readily shown that for $n \geq 79$,
\begin{equation}\label{rslt1eqn6}
L^{(1)}(n)-\dfrac{1}{n^2}> \dfrac{\pi}{2\sqrt{n}}-\dfrac{\pi^2}{8n}+\dfrac{\pi^3}{24n^{3/2}}
\end{equation} 
and 
\begin{equation}\label{rslt1eqn7}
\dfrac{\pi}{2\sqrt{n}}-\dfrac{\pi^2}{8n}+\dfrac{\pi^3}{24n^{3/2}} > \log \Bigl(1+\dfrac{\pi}{2\sqrt{n}}\Bigr)
\end{equation}
as $\log(1+x)<x-\dfrac{x^2}{2}+\dfrac{x^3}{3}$ for $x>0$. Applying \eqref{rslt1eqn6} and \eqref{rslt1eqn7} into \eqref{rslt1eqn1}, it follows that for $n \geq 85$,
\begin{equation}\label{rslt1eqn8}
\Delta \log \p(n) > \log \Bigl(1+\dfrac{\pi}{2\sqrt{n}}\Bigr).
\end{equation} 
Equations \eqref{rslt1eqn5} and \eqref{rslt1eqn8} conclude the proof of Theorem \ref{mainresult1} except for $26 \leq n \leq 456$, which we confirm by numerical checking in Mathematica. 
\qed

\emph{Proof of Theorem \ref{mainresult2}}:
Applying \eqref{lem1eqn1} and \eqref{lem3eqn1} to the lower bound of \eqref{eqn8}, it follows that for $n \geq \max \{N_1(r), 2r^2\}$,
\begin{eqnarray}\label{rslt2eqn1}
(-1)^{r-1}\Delta^r \log \p(n) &>& \dfrac{\pi}{2}\Bigl(\dfrac{1}{2}\Bigr)_{r-1}\dfrac{1}{n^{r-\frac{1}{2}}}-\Bigl((r-1)!+2\pi r^2\Bigl(\dfrac{1}{2}\Bigr)_{r-1}\Bigr)\dfrac{1}{n^r}-\dfrac{1}{n^{r+1}}\nonumber\\
& > & \dfrac{\pi}{2}\Bigl(\dfrac{1}{2}\Bigr)_{r-1}\dfrac{1}{n^{r-\frac{1}{2}}}-\Bigl(1+(r-1)!+2\pi r^2\Bigl(\dfrac{1}{2}\Bigr)_{r-1}\Bigr)\dfrac{1}{n^r}\nonumber\\
&=& \dfrac{C(r)}{n^{r-\frac{1}{2}}}-\dfrac{1+C_1(r)}{n^r}.
\end{eqnarray} 
Following \eqref{def6}, $N_2(r)=\Biggl \lceil \Biggl(\dfrac{1+C_1(r)}{C(r)}\Biggr)^2\Biggr\rceil$. Then for all $n \geq \max \{N_1(r),2r^2,N_2(r)\}$, it follows that 
\begin{equation}\label{rslt2eqn2}
(-1)^{r-1}\Delta^r \log \p(n) > \log \Biggl(1+\dfrac{C(r)}{n^{r-\frac{1}{2}}}-\dfrac{1+C_1(r)}{n^r}\Biggr)>0.
\end{equation}
For the upper bound estimation, putting \eqref{lem1eqn1} and \eqref{lem3eqn1} together into the upper bound of \eqref{eqn8}, it follows that for $n \geq \max \{N_1(r), 2r^2\}$,
\begin{eqnarray}\label{rslt2eqn3}
(-1)^{r-1}\Delta^r \log \p(n) &<& \dfrac{C(r)}{n^{r-\frac{1}{2}}}-\dfrac{(r-1)!}{2^r\ n^r}+\dfrac{C_2(r)}{n^{r+\frac{1}{2}}}+\dfrac{1}{n^{r+1}}\nonumber\\
& <& \dfrac{C(r)}{n^{r-\frac{1}{2}}}-\dfrac{(r-1)!}{2^r\ n^r}+\dfrac{C_2(r)+1}{n^{r+\frac{1}{2}}}.
\end{eqnarray}
Next, our goal is to show 
\begin{equation*}
-\dfrac{(r-1)!}{2^r\ n^r}+\dfrac{C_2(r)+1}{n^{r+\frac{1}{2}}} < -\dfrac{C^2(r)}{2\ n^{2r-1}} 
\end{equation*}
which is equivalent to
\begin{equation}\label{rslt2eqn4}
\dfrac{C^2(r)}{2}< n^{r-1}\Biggl[\dfrac{(r-1)!}{2^r}-\dfrac{C_2(r)+1}{\sqrt{n}}\Biggr].
\end{equation}
Note that for all $n \geq \Biggl \lceil \Biggl(\dfrac{2^{r+1}\Bigl(C_2(r)+1\Bigr)}{(r-1)!}\Biggr)^2\Biggr \rceil$, $\dfrac{(r-1)!}{2^{r+1}}-\dfrac{C_2(r)+1}{\sqrt{n}}>0$ and therefore
\begin{equation}
n^{r-1}\Biggl[\dfrac{(r-1)!}{2^r}-\dfrac{C_2(r)+1}{\sqrt{n}}\Biggr]= n^{r-1}\Biggl[\dfrac{(r-1)!}{2^{r+1}}+\dfrac{(r-1)!}{2^{r+1}}-\dfrac{C_2(r)+1}{\sqrt{n}}\Biggr]>n^{r-1} \dfrac{(r-1)!}{2^{r+1}}.
\end{equation}
Hence, to prove \eqref{rslt2eqn4}, it is sufficient to prove
\begin{equation}
n^{r-1} \dfrac{(r-1)!}{2^{r+1}} > \dfrac{C^2(r)}{2} \ \text{which holds for all}\ \ n \geq \Biggl\lceil \sqrt[r-1]{\Biggl(\dfrac{2^r C^2(r)}{(r-1)!}\Biggr)} \Biggr\rceil.
\end{equation}
Recall that 
$$N_3(r)=\max \Biggl\{N_1(r),2r^2, \Biggl \lceil \Biggl(\dfrac{2^{r+1}\Bigl(C_2(r)+1\Bigr)}{(r-1)!}\Biggr)^2\Biggr \rceil, \Biggl\lceil \sqrt[r-1]{\Biggl(\dfrac{2^r C^2(r)}{(r-1)!}\Biggr)} \Biggr\rceil \Biggr\}\ \ (\text{cf.}\  \eqref{def7})$$.
From \eqref{rslt2eqn3} and \eqref{rslt2eqn4}, it follows that for $n \geq N_3(r)$,
\begin{equation}\label{rslt2eqn5}
(-1)^{r-1}\Delta^r \log \p(n) < \dfrac{C(r)}{n^{r-\frac{1}{2}}}-\dfrac{C^2(r)}{2\ n^{2r-1}}< \log \Bigl(1+\dfrac{C(r)}{n^{r-1/2}}\Bigr).
\end{equation}
Equation \eqref{rslt2eqn2} and \eqref{rslt2eqn5} together imply that for $n \geq \max \{N_2(r),N_3(r)\}=N(r)$, \eqref{mainresult2eqn} holds. 
\qed

\begin{center}
	\textbf{Acknowledgements}
\end{center}
 The author would like to express sincere gratitude to her advisor Prof. Manuel Kauers for his valuable suggestions on the paper. The author was supported by the Austrian Science Fund (FWF) grant W1214-N15, project DK13.

\end{document}